\def\epsilon{\varepsilon}
\newcommand{\C}{\mathbb{C}}
\newcommand{\M}{\mathbb{M}}
\newcommand{\D}{D}
\newcommand{\F}{\Z/2\Z}
\newcommand{\R}{\mathbb{R}}
\newcommand{\Q}{\mathbb{Q}}
\newcommand{\Z}{\mathbb{Z}}
\newcommand{\ft}{\mathfrak{t}}
\newcommand{\fs}{\mathfrak{s}}
\newcommand{\es}{S^2\times S^1}
\newcommand{\ed}{D^3\times S^1}
\newcommand{\jacobi}[2]{\big(\frac{#1}{#2}\big)}
\DeclareMathOperator{\Ima}{Im}
\DeclareMathOperator{\rank}{rank}
\theoremstyle{plain}
\newtheorem{theorem}{Theorem}[section]
\newtheorem{lemma}[theorem]{Lemma}
\newtheorem{prop}[theorem]{Proposition}
\theoremstyle{definition}
\newtheorem{question}[theorem]{Question}
\newtheorem*{question*}{Question}
\newtheorem{definition}[theorem]{Definition}
\theoremstyle{remark}
\newtheorem{remark}[theorem]{Remark}
\title{Non-orientable slice surfaces and inscribed rectangles}
\author{Peter Feller}
\email{peter.feller@math.ethz.ch}
\address{ETH Zurich, Department of Mathematics, Zurich, Switzerland}
\author{Marco Golla}
\email{marco.golla@univ-nantes.fr}
\address{CNRS, Laboratoire de Math\'ematiques Jean Leray, Nantes, France}
\begin{document}
\textwidth=360pt \textheight=615pt
\maketitle
\begin{abstract}
We discuss differences between genera of smooth and locally-flat non-orientable surfaces in the 4--ball with boundary a given torus knot or 2--bridge knot. In particular, we establish that a result by Batson on the smooth non-orientable 4--genus of torus knots does not hold in the locally-flat category. We further show that certain families of torus knots are not the boundary of an embedded M\"obius band in the 4--ball and other 4--manifolds.

Our investigation of non-orientable surfaces with boundary a given torus knot is motivated by our approach to unify the proof of the existence of inscribed squares and of inscribed rectangles with aspect ratio $\sqrt3$ in Jordan curves with a regularity condition. This generalizes a result by Hugelmeyer for smooth Jordan curves.

\end{abstract}


\section{Introduction}

Let $\Gamma\subset \R^2$ be a Jordan curve; that is, $\Gamma$ is the image of an injective continuous function $S^1 \to \R^2$.
We say that $\Gamma$ is \emph{locally $1$--Lipschitz} if for each point $p \in \Gamma$ there is a neighbourhood $U$ of $p$ such that $\Gamma \cap U$ is the graph of a $1$--Lipschitz function. (See Definition~\ref{d:locally1lip} below.)
A Euclidean rectangle in $\R^2$ is said to be \emph{inscribed} in $\Gamma$ if its four corners belong to $\Gamma$.
The \emph{aspect ratio} of a rectangle whose sides have lengths $a$ and $b$ is $b/a$; note that this is only defined up to reciprocals.

\begin{theorem}\label{thm:main}
Let $\Gamma\subset \R^2$ be a locally $1$--Lipschitz Jordan curve.
Then, for all integers $n\geq 2$, there exists an integer $1\leq k \leq n-1$ such that
$\Gamma$ has an inscribed rectangle with aspect ratio $\tan\left(\frac{k\pi}{2n}\right)$.

In particular, $\Gamma$ has an inscribed square and an inscribed rectangle with aspect ratio $\sqrt{3}$.
\end{theorem}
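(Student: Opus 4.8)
The plan is to reduce the inscribed-rectangle problem to a statement about maps of surfaces into $\R^4$ and to detect the required configurations via a non-orientable genus bound. Concretely, for a fixed aspect ratio $\rho=\tan(k\pi/2n)$, a rectangle inscribed in $\Gamma$ with that aspect ratio corresponds to an unordered pair of chords of $\Gamma$ that share a common midpoint and whose lengths are in ratio $\rho$; equivalently, after choosing the right coordinates on the ``space of chords,'' one gets a point where two naturally defined maps on the configuration space agree. Following Hugelmeyer's strategy, I would encode chords of $\Gamma$ as points of a Klein bottle or Möbius-type configuration space, build from $\Gamma$ a map of this surface into the $4$--ball, and observe that an inscribed rectangle of the desired aspect ratio is exactly an intersection point of this surface with a suitable ``diagonal.'' The local $1$--Lipschitz hypothesis is what guarantees that this configuration surface embeds (or immerses with controlled singularities) smoothly enough that the $4$--genus machinery applies — this replaces the smoothness hypothesis in Hugelmeyer's original argument.

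**Key steps.** First I would make precise the correspondence between inscribed rectangles of aspect ratio $\tan(k\pi/2n)$ and coincidences of two maps defined on the space $X$ of (unordered) chords of $\Gamma$; rectangles correspond to pairs of chords with shared midpoint and prescribed length ratio, and the $\tan(k\pi/2n)$ values arise because a regular $2n$--gon inscribed in a circle has diagonals whose length ratios are exactly these tangents. Second, using local $1$--Lipschitzness, I would show the relevant configuration space maps into $B^4$ as a locally flat (indeed, sufficiently smooth) non-orientable surface $F$ whose boundary is a specific knot or link associated to $\Gamma$ — likely an unknot or a slice knot — and that $F$ has controlled normal Euler number and first homology. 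Third, I would invoke the known non-orientable $4$--genus / Gordon–Litherland style obstruction (or a $d$--invariant / signature bound of the type alluded to later in the paper) to the effect that if no inscribed rectangle with any of the aspect ratios $\tan(k\pi/2n)$, $1\le k\le n-1$, existed, then $F$ would be embedded; but an embedded $F$ of this topological type in $B^4$ with the given boundary is ruled out by the genus bound. The contradiction yields the existence of at least one such $k$. Finally, specializing $n=2$ gives a square ($k=1$, ratio $\tan(\pi/4)=1$) and $n=3$ gives the $\sqrt3$ rectangle ($k=1$ or $2$, since $\tan(\pi/6)=1/\sqrt3$ and $\tan(\pi/3)=\sqrt3$ are reciprocal).

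**Main obstacle.** The hard part will be upgrading the construction so that it runs under only the local $1$--Lipschitz hypothesis rather than genuine smoothness: I need to verify that the trace of chords of a locally $1$--Lipschitz Jordan curve forms a surface in $\R^4$ that is locally flat (and with enough regularity that normal Euler number and the slice-genus obstruction are still defined and still take the values computed in the smooth model), and to handle the degenerate chords (those of length zero, corresponding to the diagonal $\Gamma\subset X$) and any self-intersections of the configuration surface without destroying the count. A secondary technical point is ensuring the identification of the boundary knot/link and its slice properties is robust under the weaker regularity, so that the genus obstruction one feeds in is exactly the one that forces embeddedness to fail. Once these regularity issues are dispatched, the topological input is essentially Hugelmeyer's, together with the arithmetic of regular-polygon diagonal ratios that produces the family $\tan(k\pi/2n)$.
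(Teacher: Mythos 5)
Your outline correctly identifies the Hugelmeyer strategy (a Möbius-type configuration space built from $\Gamma$, mapped into a $4$--manifold, with the local $1$--Lipschitz hypothesis replacing smoothness), and you correctly note that the aspect ratios $\tan(k\pi/2n)$ come from the arithmetic of $2n$--th roots of unity. However, there are two substantive errors that would derail the argument if pursued as written.

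First, the geometric dictionary is wrong. An inscribed rectangle is \emph{not} a pair of chords sharing a midpoint whose lengths are in ratio $\rho$. The two diagonals of any rectangle have \emph{equal} length; what varies is the \emph{angle} between them, and $\rho=\tan(k\pi/2n)$ is the aspect ratio of a rectangle whose diagonals make an angle of $k\pi/n$ with each other. The map the paper uses, $\Psi_n(\{s,t\}) = \bigl(\tfrac{\alpha(s)+\alpha(t)}{2}, (\alpha(s)-\alpha(t))^{2n}\bigr)$, has the property that two pairs collide exactly when they share a midpoint, have equal chord length, and differ in direction by a multiple of $\pi/n$. So the relevant phenomenon is a \emph{failure of injectivity} of $\Psi_n$ (a self-coincidence), not an intersection of the image with a ``diagonal.'' Your reduction step is therefore built on the wrong geometric statement and would not produce the right object to obstruct.

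Second, the target of the reduction and the obstruction used are different from what you describe, and the difference is not cosmetic. The relevant surface does not sit in $B^4$ with boundary ``an unknot or a slice knot,'' and the final input is not a non-orientable $4$--genus bound, Gordon--Litherland pairing, or $d$--invariant inequality. What the paper proves (Proposition~\ref{prop:reductiontonoMoebiusinS1xB3}, relying crucially on $\Gamma_r$, the locus of midpoints of length--$r$ chords, being a Jordan curve — this is exactly where local $1$--Lipschitzness is used) is that if $\Psi_n$ were injective outside a small collar of $\partial \M$, one would obtain a properly embedded locally flat M\"obius band in $D^3\times S^1$ whose boundary is the torus knot $K_{1,2n}\subset S^2\times S^1$. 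That this cannot happen (Theorem~\ref{thm:nomob}) is proved with an elementary argument: pass to a double cover branched over the M\"obius band, cap off, and observe that a closed $4$--manifold with $b_2=1$ would then contain a class of self-intersection $n$, contradicting unimodularity of the intersection form when $n$ is not a square. This is substantially lighter machinery than the genus/signature/$d$--invariant bounds you propose (those are used for the separate results on $\gamma_4^{\rm top}$ in Section~\ref{s:balls}, not here), and it also explains why the statement is about $\tan(k\pi/2n)$ for some $k$ rather than a fixed $k$: one only needs the existence of a collision, not which root of unity it corresponds to. Until you (a) fix the chord/midpoint/angle correspondence and (b) correctly identify the ambient $D^3\times S^1$, the boundary knot $K_{1,2n}$, and the unimodularity obstruction, the proof as sketched does not close.
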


In fact, as we will see below, we will give a condition on the curve $\Gamma$ (see Proposition~\ref{prop:reductiontonoMoebiusinS1xB3}) for which the conclusion of Theorem~\ref{thm:main} holds, and we will prove that locally $1$--Lipschitz Jordan curves satisfy this condition.
At this point, we do not know of any curve $\Gamma$ for which our scheme of proof does not apply; see Remark~\ref{rem:canonealwaysfindN}. For instance, while not all polygonal Jordan curves are locally $1$--Lipschitz, they satisfy the same condition, and therefore the statement can be extended to polygonal curves.

The condition on $\Gamma$ is chosen such that the proof of the theorem reduces to the following result.

\begin{theorem}\label{thm:nomob}
Let $K \subset \es$ be the torus knot $K_{1,2n}$ in $\es$.
If $n$ is not a square, then $K$ does not bound a locally-flat M\"obius band in $\ed$.
\end{theorem}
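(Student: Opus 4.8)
The plan is to obstruct the Möbius band by passing to the double branched cover and extracting a purely algebraic (Gauss-sum / Jacobi-symbol) obstruction, in place of the smooth gauge-theoretic input that controls the analogous questions in the smooth category. Suppose for contradiction that $K=K_{1,2n}$ bounds a locally flat Möbius band $M\subset\ed$. Since $H_2(\ed,\es;\Z/2)\cong H^2(\ed;\Z/2)=0$, the surface $M$ is $\Z/2$--nullhomologous, so the pair $(\ed,M)$ admits a double branched cover $p\colon X\to\ed$; by Freedman--Quinn it is a genuine topological $4$--manifold, $\widetilde M:=p^{-1}(M)$ is a locally flat surface in it, and the deck involution $\tau$ is locally flat with fixed set $\widetilde M$. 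Its boundary is the double branched cover $Y:=\Sigma_2(\es,K)$, which I would identify by putting $K$ on the Heegaard torus of $\es$ as the curve of slope $(1,2n)$: then $Y$ is a Seifert fibered space — a lens-space bundle over $S^1$ — with $H_1(Y;\Z)\cong\Z\oplus\Z/n$, and I would pin down its torsion linking form on the $\Z/n$--summand explicitly, with signs.

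Next I would compute the homology of $X$. From $\chi(X)=2\chi(\ed)-\chi(M)=0$ together with a transfer argument for $p$, one gets that $H_2(X;\Q)$ is one-dimensional, generated by a closed-up core of $\widetilde M$, whose self-intersection equals the normal Euler number $e$ of $M$ relative to the canonical (rational-Seifert) framing of $K$ — a quantity a local model pins down in terms of $n$. So the intersection form of $X$ has rank one and determinant $\pm n$ (matching the order of the torsion of $Y$), and the branched-cover signature formula gives $\sigma(X)=2\sigma(\ed)-\tfrac12 e=-\tfrac12 e$, which is further constrained by $|\sigma(X)|\le 1$.

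For the obstruction itself I would glue $Y$ to a standard filling $Z$, extending the involution, to form a closed topological $4$--manifold $\widehat X=X\cup_Y Z$ carrying a locally flat $\Z/2$--action whose fixed set is a closed locally flat surface. Applying the Atiyah--Singer $G$--signature theorem to this action — legitimate in the locally flat setting because the fixed surface is $2$--dimensional and topological transversality applies — expresses the equivariant signature of $\widehat X$ as a self-intersection term of the fixed surface plus an Atiyah--Patodi--Singer $\rho$--defect carried by $Y$. Comparing this with the ordinary signature via Novikov additivity and computing $\rho(Y)$, a Dedekind-sum and hence ultimately a quadratic Gauss-sum expression, yields a numerical identity whose right-hand side is controlled by the Jacobi symbol: using the classical fact that the relevant character sum $\sum_{x}\jacobi{x}{n}$ vanishes precisely when $n$ is not a perfect square, the identity forces $n$ to be a square, a contradiction. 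Equivalently one may run this step through van der Blij's lemma, relating $\sigma(X)\bmod 8$ to the Gauss sum of the linking form of $Y$ found in the first step, with the same conclusion.

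The heart of the matter — and the main obstacle — is this last step. In the smooth category one would simply invoke a $d$--invariant inequality for the lens-space-type boundary $Y$; topologically that tool is unavailable (indeed, the point of the surrounding discussion is that smoothly-obstructed Möbius bands can exist topologically), so one must isolate the weaker, genuinely number-theoretic residue of that obstruction and check that it is exactly the square-ness of $n$. The two delicate points are the sign-precise determination of the linking form of $Y$ and of the Euler number $e$, and the verification that the $\rho$--invariant / Gauss-sum computation really collapses to the Jacobi-symbol criterion rather than to something strictly stronger.
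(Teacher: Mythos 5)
Your starting point coincides with the paper's: pass to the double branched cover $\Sigma(M)$ of $\ed$ along the Möbius band $M$, with boundary $\Sigma(K)$. But there is a genuine error in your homology computation. You assert $H_2(\Sigma(M);\Q)$ is one-dimensional, generated by ``a closed-up core of $\widetilde M$.'' In fact $b_2(\Sigma(M))=0$. The lift $\widetilde M$ of the branch locus is the fixed set of the covering involution and is homeomorphic to $M$, i.e.\ it is again a Möbius band with nonempty boundary, so it carries no fundamental class in $H_2$; there is no closed surface here. To get the correct Betti numbers: $H_3(\Sigma(M))$ is torsion of odd order (this uses the Gysin sequence for the free double cover of the exterior), so by Poincaré--Lefschetz duality $H_1(\Sigma(M),\Sigma(K);\Q)=0$, hence $H_1(\Sigma(K);\Q)\to H_1(\Sigma(M);\Q)$ is onto and $b_1(\Sigma(M))\le b_1(\Sigma(K))=1$. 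Combined with $\chi(\Sigma(M))=0$ this forces $b_1(\Sigma(M))=1$ and $b_2(\Sigma(M))=0$. So your rank-one intersection form of determinant $\pm n$ on the branched cover, and the ``normal Euler number $e$'' you want to feed into a branched-cover signature formula, do not exist as stated.

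The second and larger issue is the endgame. You propose to glue on an unspecified ``standard filling $Z$,'' extend the involution, and invoke the $G$--signature theorem (or van der Blij / Gauss sums) to produce a Jacobi-symbol obstruction. The paper does something much more elementary and, crucially, much more specific: $\Sigma(K)$ is identified as $(-n)$--surgery on the two components of the torus link $T_{2,2n}$ in $S^3$, and one glues on the trace $C$ of that surgery, reversed. The point of this particular cap is that $C$ contains a \emph{closed} surface $S$ of self-intersection $-n$ (cap a Seifert surface of one attaching circle with the core of its $2$--handle). A Mayer--Vietoris computation then shows the closed $4$--manifold $X=\Sigma(M)\cup_{\Sigma(K)}(-C)$ has $b_2(X)=1$, so its intersection form is unimodular of rank one, hence $\langle+1\rangle$ since it contains the class $[S]$ of positive square. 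But then $[S]^2=+n$ must be a perfect square. No $G$--signature, no $\rho$--invariants, no Gauss sums are needed. Your proposed route is not only heavier, it is also unclear it would close: Milgram/van der Blij control only a mod-$8$ residue of the signature and the discriminant form, and a $\rho$--invariant computation tends to deliver a quadratic-residue (Legendre/Jacobi) condition on the linking form of $\Sigma(K)$ rather than the exact ``$n$ is a perfect square'' statement. Indeed the paper's own linking-form argument in the style you suggest (Proposition~\ref{prop:T2p2kppm1doesnotboundMB}, via the Murakami--Yasuhara criterion) recovers Theorem~\ref{thm:nomob} only for $n$ prime; the closed-surface-in-a-closed-manifold argument is what gets the full, non-prime statement.
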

Here, for coprime integers $p$ and $q$, and identifying $S^2=\C\cup\{\infty\}$, the \emph{torus knot} $K_{p,q}$ in $\es$ is
$\left\{\left(e^{2\pi i p t},e^{2\pi i q t}\right)\;\middle|\;t\in\R\right\}\subset\es$.
Our proof of Theorem~\ref{thm:nomob} combines a branched double cover construction and a very simple intersection form obstruction. While our proof does not work when $n$ is a square, we believe that the result also holds without that assumption. 

Theorem~\ref{thm:main} is motivated by a question posed by Toeplitz in 1911, often referred to as the \emph{square peg problem}: does every Jordan curve contain an inscribed square?
While the question remains open in full generality, it has been resolved in the positive for many classes of curves. We refer to Matschke's excellent survey on the topic~\cite{Matschke14}. See also Schwartz~\cite{Schwartz} and Tao~\cite{Tao} for further progress.

The starting point of the present article was a beautiful idea of Hugelmeyer~\cite{Hugelmeyer18}, which is the first article to address Toeplitz's question for rectangles of a fixed aspect ratio $r>1$. In the paper, Hugelmeyer establishes Theorem~\ref{thm:main} for $n\geq 3$ (the case $n=2$ was previously known) and smooth Jordan curves, using an idea of proof in line with Vaughan's proof~\cite{Meyerson81,Matschke14} of the following result.
Every Jordan curve has an inscribed rectangle. In his surprising proof, Vaughan elegantly reduces this result to the statement that there is no proper embedding of the M\"obius band into the upper-half space $\R^2\times [0,\infty)$.

In more detail, in case $\Gamma$ is smooth, Hugelmeyer gives a reduction of Theorem~\ref{thm:main} to the smooth analog of Theorem~\ref{thm:nomob}. He then observes that, in the smooth setting and for $n\geq 3$, Theorem~\ref{thm:nomob} follows from a result by Batson~\cite{Batson}: for $n\geq 3$, the torus knot $T_{2n-1,2n}$ in the 3--sphere $S^3=\partial B^4$ does not arise as the boundary of a smoothly embedded M\"obius band in the 4--ball $B^4$.

The need for locally-flat statements in our approach, in contrast to Batson's smooth results, as well as the differences between the smooth and topological \emph{orientable} 4-genus~\cite{Rudolph_84_SomeTopLocFlatSurf,BaaderFellerLewarkLiechti_15}, motivated us to compare locally-flatly and smoothly embedded \emph{non-orientable} surfaces in the 4--ball.

For a knot $K$ in $S^3$, let $\gamma_4(K)$ denote the smallest first Betti number among smooth non-orientable surfaces in $B^4$ with boundary $K\subset S^3=\partial B^4$.
Similarly, denote by $\gamma_4^\text{top}(K)$ the smallest first Betti number among locally-flat non-orientable surfaces in $B^4$ with boundary $K\subset S^3=\partial B^4$.
\begin{theorem}\label{thm:gammafortorusknots}
For each integer $n\geq 5$, we have $\gamma_4^{\rm top}(T_{2n-1,2n})<\gamma_4(T_{2n-1,2n})$.
\end{theorem}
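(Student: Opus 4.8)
The plan is to play Batson's smooth lower bound against a topological upper bound that beats it. By Batson~\cite{Batson} one has $\gamma_4(T_{2n-1,2n})=n-1$; only the inequality $\gamma_4(T_{2n-1,2n})\ge n-1$ is needed, and it is proved by a correction term computation on the double branched cover $\Sigma_2(T_{2n-1,2n})=\Sigma(2,2n-1,2n)$. Since correction terms are not invariants of the locally flat category, there is no reason for the same bound to persist, and the goal is to construct, for each $n\ge 5$, a locally flat non-orientable surface in $B^4$ with boundary $T_{2n-1,2n}$ whose first Betti number is strictly less than $n-1$. (One in fact expects a bound on $\gamma_4^{\rm top}(T_{2n-1,2n})$ that is independent of $n$ --- which would be $<n-1$ precisely when $n\ge 5$ --- so that Batson's phenomenon of arbitrarily large non-orientable $4$-genus disappears in the topological category.)

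The construction I would try rests on Freedman's disk embedding theorem, which lets one trade ``smoothly slice'' for ``topologically slice'' in band-surgery arguments. Concretely: if $T_{2n-1,2n}$ is carried by $m$ non-orientable band moves to a knot $J$ with trivial Alexander polynomial, then $J$ bounds a locally flat disk in $B^4$, and capping off gives $\gamma_4^{\rm top}(T_{2n-1,2n})\le m$; so it suffices to bound the number of band moves needed to reach some knot with trivial Alexander polynomial --- a distance that can be far smaller than the band distance to the unknot, which Batson forces to be at least $n-1$. To control it I would use that $T_{2n-1,2n}=T_{2n,2n-1}$ is obtained from the unknot by a single full positive twist on $2n-1$ parallel strands, and search for a short sequence of band moves killing the Alexander polynomial without unknotting. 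An alternative route, closer to the branched-cover techniques used for Theorem~\ref{thm:nomob}: $T_{2n-1,2n}$ bounds a locally flat non-orientable surface of first Betti number $\beta$ in $B^4$ as soon as $\Sigma(2,2n-1,2n)=\partial W$ for a compact topological $4$-manifold $W$ with $\pi_1(W)\cong\Z/2$, $b_2(W)=\beta$, and intersection form of the restricted type occurring for double branched covers of $B^4$ (in particular carrying a characteristic class whose square equals the prescribed normal Euler number); Freedman's surgery theory then produces the surface from $W$. The order of $H_1(\Sigma(2,2n-1,2n))=\Z/(2n-1)$ forces nothing about $b_2(W)$ by itself, so the real question is whether $\Sigma(2,2n-1,2n)$ bounds such a $W$ with $b_2$ bounded, matching the signature and the characteristic-element constraint with only boundedly many handles --- a finite lattice problem.

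The main obstacle is precisely this topological existence statement. In the band-move formulation the hard part is genuinely exhibiting a short non-orientable cobordism from $T_{2n-1,2n}$ to a knot with trivial Alexander polynomial. In the branched-cover formulation the delicate points are: pinning $\pi_1(W)$ down to exactly $\Z/2$, rather than the larger group one naively gets from $H_1$ of a non-orientable spanning surface; ensuring the intersection form is of the special kind realized by a double branched cover (the $w_2$-constraint, and the relation between the characteristic class's square and the normal Euler number); and checking the hypotheses --- good fundamental group, vanishing Kirby--Siebenmann invariant --- under which Freedman's machinery applies. Granting such a surface with first Betti number $<n-1$ whenever $n\ge 5$, the desired strict inequality $\gamma_4^{\rm top}(T_{2n-1,2n})<\gamma_4(T_{2n-1,2n})$ follows from Batson's equality $\gamma_4(T_{2n-1,2n})=n-1$.
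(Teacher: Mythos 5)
You correctly identify the two halves of the argument: Batson's equality $\gamma_4(T_{2n-1,2n})=n-1$ supplies the smooth lower bound, and the topological side should be handled by a Freedman-type construction of a locally flat non-orientable filling surface with $b_1<n-1$. Your first route --- pass to a knot with trivial Alexander polynomial and cap it with a Freedman disk --- is indeed the same underlying mechanism the paper uses. But you explicitly stop at ``granting such a surface,'' and that is exactly where the content of the theorem lies. As stated, the proposal has a genuine gap: neither the band-move route nor the branched-cover route is carried out, and neither is reduced to a checkable finite computation.

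The paper fills this gap with an explicit construction (Proposition~\ref{prop:gammatop<gammasmooth}). It builds a non-orientable spanning surface $S\subset S^3$ for $T_{2n-1,2n}$ with $b_1(S)=n$ (a checkerboard surface for the closure of $(\sigma_1\cdots\sigma_{2n-1})^{-1}$, modified by a full positive twist along the braid axis), pushes it into $B^4$, and then, rather than performing band moves on the knot itself, replaces an \emph{embedded once-punctured torus subsurface} $T\subset S$ by the locally flat disk that Freedman's theorem provides for $\partial T$. This is legitimate precisely when $\Delta_{\partial T}\equiv 1$; moreover $\pi_1(B^4\setminus D)\cong\Z$ for that disk, which is what lets a Seifert--van Kampen argument pin $\pi_1$ of the complement of the resulting surface down to $\Z/2\Z$ --- a concern you raise but do not resolve. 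Finding $T$ is turned into a concrete Seifert-form problem: one computes the Seifert matrix $M_1$ on an orientable incompressible subsurface $F\subset S$ with $b_1(F)=n-1$ and exhibits vectors $a=(-1,-1,0,1)^T$, $b=(4,1,2,-4)^T$ with $a^TM_1a=0$, $b^TM_1a=0$, $a^TM_1b=\pm1$, which by a realization result (\cite{FellerLewark_16}, \cite{FellerMcCoy_15}) yields the desired once-punctured torus whose boundary has Alexander polynomial $1$. Because the $n=5$ matrix embeds as the top-left block of the matrix for every $n\geq 5$, the same pair of vectors works for all $n\geq 5$, giving $b_1(\Sigma)=n-2<n-1$. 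Without an argument of this kind --- or an explicit short non-orientable cobordism to a trivial-Alexander-polynomial knot in your band-move formulation --- the proposal does not yet prove the theorem.

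A smaller point: your parenthetical expectation that $\gamma_4^{\rm top}(T_{2n-1,2n})$ is bounded independently of $n$ is not established by the paper (it only shows $\leq n-2$) and is not needed for the statement; it would be a much stronger claim.
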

Batson proved in~\cite{Batson} that $\gamma_4(T_{2n-1,2n})=n-1$ for all integers $n>1$.
We do not determine $\gamma_4^{\rm top}(T_{2n-1,2n})$, but only show that it is at most $n-2$. We believe that the following two questions are open.
Does $T_{9,10}$ bound a locally-flat punctured Klein bottle? Are there torus knots that bound a locally-flat M\"obius band in $B^4$, but not a smooth one?

In general we study which torus knots in $S^3$ arise as the boundary of a locally-flat M\"obius band in~$B^4$.
In particular in Section~\ref{subsec:mob}, we focus on the torus knots $T_{2n,2n+1}$: in contrast to $T_{2n-1,2n}$, it is in general unknown which ones bound smooth M\"obius bands in~$B^4$.

For example, we generalize the folklore result that neither $T_{4,5}$ nor $T_{5,6}\subset S^3$ bounds a locally-flat M\"obius band in~$B^4$.

\begin{prop}\label{p:T45}
Let $p\equiv 5 \pmod 8$ be a positive integer. Then $T_{p,p\pm 1}$ does not bound a locally-flat M\"obius band in $B^4$.
\end{prop}

%
%
Branched covers feature prominently in most topological proofs. We use them in combination with correction terms in Heegaard Floer homology and with Donaldson's diagonalisation theorem to close the paper with two results on torus knots and 2-bridge knots which cannot bound smooth M\"obius bands in $B^4$.

\subsection*{Addendum} Since the appearance of this manuscript, Greene and Lobb have used symplectic topology to extend Hugelmeyer's result from~\cite{Hugelmeyer18} to cyclic quadrilaterals inscribed in smooth curves~\cite{GreeneLobb-rect, GreeneLobb-cyclic}. Our main results on inscribed quadrilaterals, Theorem~\ref{thm:main} and Proposition~\ref{prop:reductiontonoMoebiusinS1xB3}, go in a different direction, since we aim at weakening the regularity assumption on the curve. Our results on non-orientable surfaces in 4-manifolds are completely independent of their work.

\subsection*{Structure} In Section~\ref{s:keyobs} we prove Proposition~\ref{prop:reductiontonoMoebiusinS1xB3}, which contains the key technical property of curves that allows us to link metric geometry to topology. In Section~\ref{s:metric}, we prove Theorem~\ref{thm:main}, assuming Theorem~\ref{thm:nomob}, which is proved in Section~\ref{s:nomob}. In Section~\ref{s:balls} we prove Theorem~\ref{thm:gammafortorusknots}, along with a number of obstructions to the existence of smooth and locally-flat M\"obius bands in $B^4$ bounding torus knots and $2$--bridge knots in $S^3$.

\subsection*{Acknowledgements}
PF thanks Luca Studer for introducing him to Vaughan's beautiful proof that every Jordan curve has an inscribed square and the survey by Matschke. We thank Andr\'{a}s Stipsicz for inquiring about locally-flat non-orientable surfaces filling torus knots. PF gratefully acknowledges support by the Swiss National Science Foundation Grant 181199. This project started when PF visited the University of Nantes, and was partially carried on while MG visited ETH and when both authors stayed at MPIM. We thank all three institutions for their support.

\section{From curves to M\"obius bands}\label{s:keyobs}

In this section, we fix a continuous injection $\alpha\colon S^1\to \C$ and denote the image of $\alpha$ (by definition, a Jordan curve) as $\Gamma\subset \C$.

Let $\M$ denote $S^1\times S^1/(\Z/2\Z)$, that is, the quotient of $S^1\times S^1$ by the relation $(x,y)\sim (y,x)$. Note that $\M$ is a M\"obius band.
For each positive integer $n$, we consider the map
\[
\Psi_n\colon \M\to \C^2, \{s,t\}\mapsto \left(\frac{\alpha(s)+\alpha(t)}{2},\left(\alpha(s)-\alpha(t)\right)^{2n}\right).
\]
Note that the image of $\Psi_n$ does not depend on the parametrization $\alpha$ of $\Gamma$.

\begin{remark}\label{rem:keyobs}
The maps $\Psi_n$ were studied by Hugelmeyer~\cite{Hugelmeyer18}, extending an idea of Vaughan; compare~\cite{Meyerson81,Matschke14}. A crucial observation is the following. For $n\geq 2$, the map $\Psi_n$ is injective if and only if there exists an integer $1\leq k \leq n-1$ such that
$\Gamma$ has an inscribed rectangle with aspect ratio $\tan\left(\frac{k\pi}{2n}\right)$.
\end{remark}

\subsection{The image of $\Psi_1$ is topologically locally flat} 
Denote with $O_{k}$ the origin of $\R^k$.
\begin{definition}
Fix integers $m>n>0$. A subset $F$ of an $m$--manifold $M$ is called \emph{locally-flat (of dimension $n$)} at $x\in F$ if there exist an open neighbourhood $U\subset M$ of $x$ such that $(U,U\cap F)$ is homeomorphic to an open subset of $(\R^m, \R_{\ge 0}\times \R^{n-1} \times \{O_{m-n}\})$.
The subset $F$ is called \emph{locally-flat} if it is locally-flat for all $x\in F$.

Two locally-flat submanifolds $F, F'$ without boundary, of dimension $n$ and $n'$ respectively, are said to intersect \emph{transversely} in $M$ if: either $F \cap F'$ is empty, or $n+n' \ge m$ and every point $x \in F \cap F'$ has a neighbourhood $U \subset M$ such that $(U,U \cap F,U \cap F')$ is homeomorphic to an open subset of $(\R^m, \R^{n}\times \{O_{m-n}\}, \{O_{m-n'}\}\times\R^{n'}).$
\end{definition}

\begin{lemma}\label{lem:ImPsi_1islocflat}
$\Psi_1$ is injective and its image $M\coloneqq\Psi_1\left(\M\right)\subset \C^2$ is a locally-flat M\"obius band. Furthermore, there is a regular neighbourhood $N$ of $\C\times\{0\}$ such that $\partial N$ intersects $M$ transversely and the pair $(N,N\cap M)$ is homeomorphic to
\[
\left(\C\times \D_1,\{(x,y)\in S^1\times \D \mid |y|x^2=y\}\right) = \left(\C\times \D_1, \{(s, r(s^2)) \mid r\in[0,1], s \in S^1\}\right).
\]
In particular, $(\partial N,M\cap \partial N)$ is homeomorphic to $(\C\times S^1,K_{1,2})$.
\end{lemma}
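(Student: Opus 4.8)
The plan is to analyze $\Psi_1$ directly and separate the behavior near the "diagonal" (where $s=t$, i.e.\ $\alpha(s)-\alpha(t)=0$) from the behavior away from it. Write $\{s,t\}\mapsto(m,v)$ with $m=\tfrac12(\alpha(s)+\alpha(t))\in\C$ and $v=(\alpha(s)-\alpha(t))^2\in\C$; the key point is that, although $\alpha(s)-\alpha(t)$ is only well-defined up to sign on $\M$, its square is a genuine function on $\M$. Away from the diagonal, $\alpha(s)\ne\alpha(t)$ since $\alpha$ is injective, so locally on $\M$ one can choose a continuous branch of $\alpha(s)-\alpha(t)$, and the pair $(\alpha(s)+\alpha(t),\alpha(s)-\alpha(t))$ determines $\alpha(s)$ and $\alpha(t)$ hence $\{s,t\}$; this gives injectivity of $\Psi_1$ off the diagonal. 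On the diagonal $\Psi_1(\{s,s\})=(\alpha(s),0)$, and since $\alpha$ is injective these points are distinct from each other and, because $v=0$ forces $\alpha(s)=\alpha(t)$ hence $s=t$, distinct from the images of off-diagonal points. So $\Psi_1$ is injective, and being a continuous injection from the compact space $\M$ it is a homeomorphism onto its image $M$.

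For local flatness and the structure of the regular neighbourhood, let me set up coordinates near the diagonal. Parametrize a neighbourhood of a diagonal point $\{s_0,s_0\}$ in $\M$ by $(a,w)$ where we think of the unordered pair as $\{s_0 e^{i(a+w)}, s_0 e^{i(a-w)}\}$ up to the identification $w\sim -w$; here $w=0$ is the diagonal. Then $\alpha(s)-\alpha(t)$ vanishes to first order in $w$ (using only continuity, not differentiability, of $\alpha$: $\alpha(s)-\alpha(t)\to 0$ as $w\to 0$), so $v=(\alpha(s)-\alpha(t))^2$ is, after a local change of coordinates on the $\C$-factor of $\M$ that is a homeomorphism, comparable to $w^2$ in a way invariant under $w\mapsto -w$. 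The honest way to make this precise without derivatives is to use the curve $\Gamma$ itself: near $m$, the two points $\alpha(s),\alpha(t)$ lie on $\Gamma$ and $m$ is their midpoint, $\sqrt v=\pm(\alpha(s)-\alpha(t))$ is (twice) the vector from $m$ to one of them. I would prove that the assignment sending $\{s,t\}$ near the diagonal to $(m, \pm(\alpha(s)-\alpha(t)))\in\C\times(\C/\{\pm1\})$ is a homeomorphism onto a neighbourhood of $(m_0,0)$ in $\C\times(\C/\{\pm1\})$, and that $\C/\{\pm1\}$ is itself homeomorphic to $\C$ via $v=(\cdot)^2$; composing, $\Psi_1$ is a homeomorphism near the diagonal onto $\{(m,v)\}$ with $m$ ranging over a disk and $v$ over a neighbourhood of $0$, constrained by the equation relating $m$ and $v$ to $\Gamma$. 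This simultaneously shows $M$ is locally flat (it is locally the graph, in the $v$-coordinate, of the continuous function $m\mapsto v$ determined by $\Gamma$, which near the diagonal is the square of the "half-chord" function) and gives a model for $M$ near $\C\times\{0\}$.

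With this model, I take $N$ to be a regular neighbourhood of $\C\times\{0\}$ of the form $\C\times\D_r$ for small $r$; transversality of $\partial N$ with $M$ follows because $M$ meets $\C\times\{0\}$ exactly in the diagonal image $\{(\alpha(s),0)\}$ and, in the local $(m,v)$-coordinates, $M\cap N$ looks like $\{(m, v): |v|\le r,\ v = \varphi(m)\}$ for a continuous $\varphi$ vanishing only where $m\in\Gamma$ with $\sqrt{\varphi(m)}$ tangent-to-first-order behavior; rescaling $v$ by $1/|\varphi|$ where nonzero and identifying $\Gamma$ with $S^1$, the pair $(N,N\cap M)$ becomes $(\C\times\D_1,\{(x,y)\in S^1\times\D_1: |y|x^2=y\})$, which is exactly the family of rays $\{(s,r s^2): r\in[0,1],s\in S^1\}$ as claimed. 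Restricting to $\partial N$ gives $(\C\times S^1, \{(s,s^2):s\in S^1\})$, and $s\mapsto(s, s^2)$ is a parametrization of the torus knot $K_{1,2}$ in $\C\times S^1=\es$. The main obstacle, and where care is needed, is the step of identifying the unordered-pair-to-$(m,\sqrt v)$ map as a homeomorphism near the diagonal using only continuity of $\alpha$: one must rule out pathologies (e.g.\ the "half-chord" vector spiralling or failing to be a local coordinate) purely from injectivity of $\alpha$ and compactness, rather than from a tangent-line argument; I expect this to come down to an invariance-of-domain argument applied to the continuous injective map $\M\to\C\times(\C/\{\pm1\})$ built above, together with the elementary fact that $v\mapsto v^2$ identifies $\C/\{\pm1\}$ with $\C$.
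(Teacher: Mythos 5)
Your approach diverges sharply from the paper's, and the divergence is where the gap lies. The paper never analyzes $M$ locally near the diagonal from scratch: it checks the whole statement explicitly for the round circle $\Gamma_{\rm std}=S^1$ (where $M_{\rm std}$ is a smooth submanifold), invokes Jordan--Sch\"onflies to get a compactly supported homeomorphism $\phi\colon\C\to\C$ with $\phi(\Gamma_{\rm std})=\Gamma$, and then observes that the induced self-map $\{x,y\}\mapsto\{\phi(x),\phi(y)\}$ of $\C^2/(\Z/2\Z)\cong\C^2$ is a homeomorphism of pairs carrying $(\C^2,M_{\rm std})$ to $(\C^2,M)$. Local flatness, the structure of the regular neighbourhood, transversality, and the $K_{1,2}$ knot type are then all inherited at once from the standard model. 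Your injectivity argument is fine, but everything after that tries to establish the local model directly from continuity of $\alpha$, and that route breaks.

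The specific step that fails is the claim that near a diagonal point $M$ is \emph{locally the graph of a continuous function} $m\mapsto v$, equivalently that the midpoint $m=\tfrac12(\alpha(s)+\alpha(t))$ locally determines the unordered pair $\{s,t\}$. This is false for general Jordan curves, and even for smooth ones. For instance, if near a point $\Gamma$ is the graph of $f(x)=\varepsilon\sin(x/\varepsilon^2)$, then for $x_1+x_2=2\pi\varepsilon^2$ one has $f(x_2)=f(2\pi\varepsilon^2-x_1)=-f(x_1)$, so every chord with $x_1+x_2=2\pi\varepsilon^2$ has the \emph{same} midpoint $(\pi\varepsilon^2,0)$: a whole $1$--parameter family of points of $\M$ projects to one value of $m$. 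Here $M$ is a perfectly good locally flat (indeed smooth) surface, but it is not a graph over $m$; the projection $\M\to\C$, $\{s,t\}\mapsto m$, is not locally injective at the diagonal, so invariance of domain does not apply to it. (Your other invocation of invariance of domain, for the map $\M\to\C\times(\C/\{\pm1\})$, is a $2$--manifold mapping into a $4$--manifold, where the theorem says nothing.) You correctly flagged that ``ruling out pathologies purely from continuity'' is the crux; the missing idea is not to fight this locally but to transport the entire configuration from the round circle via an ambient homeomorphism, which is exactly what the paper does and which you should adopt.
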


Here, $\D_r\coloneqq \{z\in \C\;|\; |z|\leq r\}$ is the unite disc of radius $r$, and a closed regular neighborhood is understood to be a subset $N\subset \C^2$ such that there exist a homeomorphism $\Psi\colon \C^2\to\C^2$ with $\Psi(N)=\C\times \D_r$ and $\Psi$ restricts to the identity on $\C\times\{0\}$.

\begin{proof}
Clearly, $\Psi_1$ is injective. The rest of the statement is easy to check
when $\Gamma$ is the unit circle $\Gamma_{\rm std} = S^1\subset \C$. Indeed, the image $M_{\rm std}\coloneqq \Ima(\Psi_1)$ is
a smooth 2-submanifold of $\C^2$; in particular, the Möbius band $M_{\rm std}$ is locally-flat. And, for $N=\C\times \D_1$, $\partial N=\C\times S^1$ intersects $M_{\rm std}$ transversely in
\[\left\{\left(\tfrac{\sqrt{3}}{2}e^{2\pi i t},-e^{2\pi i 2t}\right)\;\middle|\;t\in\R\right\}\subset \frac{\sqrt{3}}{2}S^1\times S^1\subset\C\times S^1.\]

For the general case, let $\phi\colon \C\to \C$ be a compactly supported homeomorphism such that $\Gamma = \phi(\Gamma_{\rm std})$, which exists by the Jordan--Sch\"onflies theorem.
The statement follows by identifying $\C^2$ with the space of unordered pairs $\C^2/(\Z/2\Z)=\left\{\{x,y\}\;\middle|\;x,y\in\C\right\}$ and observing that the self-homeomorphism of $\C^2/(\Z/2\Z)$ given by $\Phi(\{x,y\})=\{\phi(x),\phi(y)\}$ induces a homeomorphisms of topological pairs between $(\C^2,M_{\rm std})$ and $(\C^2,M)$.
To be explicit, $(\C^2,M_{\rm std})$ and $(\C^2,M)$ are homeomorphic as pairs via 
\begin{align*}
\Psi\colon \C^2&\to\C^2\\
(z,w)&\mapsto \left(\frac{\phi\left(z\pm \frac{\sqrt{w}}{2}\right)+\phi\left(z\mp \frac{\sqrt{w}}{2}\right)}{2},\left(\phi\left({\textstyle z\pm \frac{\sqrt{w}}{2}}\right)-\phi\left({\textstyle z\mp \frac{\sqrt{w}}{2}}\right)\right)^2\right). \qedhere
\end{align*}
\end{proof}

\subsection{The image of $\Psi_1$ under taking powers in the second coordinate}
Note that $\Psi_n=p_n\circ\Psi_1$, where $p_n:\C^2\to\C^2, (z,w)\mapsto (z,w^n)$.

\begin{lemma}\label{lem:imPsi_nisflat}
Fix an $n\geq 2$. If $\Psi_n$ is an injection on an open subsurface $S$ of $\M\setminus\partial \M$, then $\Psi_n(S)\subset \C^2$ is a locally-flat surface.
\end{lemma}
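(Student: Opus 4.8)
The plan is to exploit the factorization $\Psi_n=p_n\circ\Psi_1$ together with Lemma~\ref{lem:ImPsi_1islocflat}: since $\Psi_1$ is a continuous injection of the compact space $\M$ into the Hausdorff space $\C^2$, it is a homeomorphism onto the locally-flat M\"obius band $M\coloneqq\Psi_1(\M)$, and the only locus where $p_n\colon(z,w)\mapsto(z,w^n)$ fails to be a local homeomorphism of $\C^2$ is $\C\times\{0\}$, because $w\mapsto w^n$ restricts to an $n$--fold covering of $\C\setminus\{0\}$. So the first thing I would check is that $U\coloneqq\Psi_1(S)$ stays away from $\C\times\{0\}$. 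This is where the hypothesis $S\subset\M\setminus\partial\M$ is used: every element of $S$ is an unordered pair $\{s,t\}$ with $s\neq t$, hence $\alpha(s)\neq\alpha(t)$ by injectivity of $\alpha$, hence the second coordinate $(\alpha(s)-\alpha(t))^2$ of $\Psi_1(\{s,t\})$ is nonzero; thus $U\subset\C\times(\C\setminus\{0\})$, an open subset of the locally-flat surface $M$, and $p_n$ is a covering map, in particular a local homeomorphism, on a neighbourhood of $U$ in $\C^2$.

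Next I would argue pointwise, local flatness being a condition on germs. Fix $q\in\Psi_n(S)=p_n(U)$ and write $q=p_n(u)$ with $u\in U$. Using that $p_n$ is a local homeomorphism at $u$ (as $u\notin\C\times\{0\}$) and that $U$ is open in $M$, I would choose an open ball $B\subset\C\times(\C\setminus\{0\})$ with $u\in B$, $B\cap M\subset U$, and $p_n|_B$ a homeomorphism onto the open set $p_n(B)\ni q$. Restricting gives a homeomorphism of pairs $(B,\,B\cap M)\to(p_n(B),\,p_n(B\cap M))$; since $u\in U\subset M\setminus\partial M$ and $M$ is locally flat of dimension $2$, the pair $(B,B\cap M)$ is homeomorphic to an open subset of $(\R^4,\R^2\times\{O_2\})$, and hence so is $(p_n(B),p_n(B\cap M))$. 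It then remains to identify $\Psi_n(S)=p_n(U)$ with $p_n(B\cap M)$ on a neighbourhood of $q$, after which $\Psi_n(S)$ is locally flat at $q$, and $q$ being arbitrary finishes the proof.

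The main obstacle will be precisely that last identification: a priori the $n$--to--$1$ map $p_n$ could fold a second point $m\in M$ of the fibre $p_n^{-1}(q)$ onto $q$, producing a second sheet of $p_n(M)$ through $q$ and destroying local flatness. This is exactly what the hypothesis rules out: the injectivity of $\Psi_n|_S$, equivalently of $p_n|_U$, forbids two points of $U$ from sharing a $p_n$--fibre, so no such $m$ lies in $U$; combined with the compactness, hence closedness, of $M$ in $\C^2$ (which lets the finitely many remaining fibre points of $q$ that lie on $M$ be separated off into their own charts), one can shrink the neighbourhood of $q$ so that $\Psi_n(S)$ is there a single sheet over a chart of $M$. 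In the case $S=\M\setminus\partial\M$ that is actually needed for Theorem~\ref{thm:main} this point is immediate, since then $M\setminus U=\partial M\subset\C\times\{0\}$ is disjoint from $p_n^{-1}(q)\subset\C\times(\C\setminus\{0\})$.
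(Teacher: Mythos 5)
Your proof matches the paper's essentially line for line: both factor $\Psi_n=p_n\circ\Psi_1$, note that the second coordinate of $\Psi_1(\{s,t\})$ is nonzero for $\{s,t\}\in\M\setminus\partial\M$ so that $p_n$ is a local homeomorphism near $\Psi_1(S)$, and then push the local flatness of $M=\Psi_1(\M)$ through a small chart on which $p_n$ is a homeomorphism. The paper states the ``single sheet'' issue in one clause --- ``we choose $U$ sufficiently small such that $p_n(U\cap M)=p_n(U)\cap\Psi_n(S)$'' --- while you flag it as the main obstacle and say a bit more about why injectivity of $\Psi_n|_S$ handles it; that is a reasonable expansion of the same step.

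Two small cautions. First, your attempted general resolution (``separate the finitely many remaining fibre points of $q$ that lie on $M$ into their own charts'') only disposes of fibre points that are \emph{not} limit points of $\Psi_1(S)$; if some $m\in M\setminus\Psi_1(S)$ with $p_n(m)=q$ were in the closure of $\Psi_1(S)$, shrinking the chart around $q$ would not remove the punctured sheet accumulating there. The paper's phrasing has the same implicit assumption, so you are at the same level of rigour, but the sentence as written does not quite close the gap it announces. Second, your final sentence misidentifies the $S$ that the paper actually feeds into this lemma: in the proof of Proposition~\ref{prop:reductiontonoMoebiusinS1xB3} one takes $S$ to be the interior of $\Psi_n^{-1}(\C^2\setminus N^\circ)$, not $\M\setminus\partial\M$; there the issue is dispatched not because $M\setminus\Psi_1(S)$ lies in $\C\times\{0\}$ but because the relevant boundary of $\Psi_1(S)$ in $M$ is sent by $p_n$ into $\partial N$, away from the interior points $q$ under consideration.
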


\begin{proof}
This is immediate from Lemma~\ref{lem:ImPsi_1islocflat}.
For $(z,w)\in \Psi_n(S)$, we have $w\neq 0$. Let $(z,u)$ be a preimage of $(z,w)$ under $p_n$ that lies in $M\subset \C^2$. Let $U$ be an open neighborhood of $(z,u)$ that witnesses the local flatness of $M$ at $(z,u)$ and that maps injectively under $p_n$.
Furthermore, we choose $U$ sufficiently small such that $p_n(U\cap M)=p(U)\cap \Psi_n(S)$. Thus, $p(U)$ is an open neighborhood of $(z,w)$ that establishes the local flatness of $M_n$ at $(z,w)$.
\end{proof}

Using Lemma~\ref{lem:imPsi_nisflat} and Theorem~\ref{thm:nomob}, we can obtain the following.

\begin{prop}\label{prop:reductiontonoMoebiusinS1xB3}
Fix a Jordan curve $\Gamma \subset \C$, an integer $n\geq 2$, and a positive real number $d$.
Suppose that $N$ is a regular closed neighborhood of $\C\times \{0\}$ in $\C^2$ containing $\C\times \D_{d^{2n}}$ such that:
\begin{itemize}
\item the map $\Psi_n$ and $\partial N$ are transverse;
\item the pair $(\partial N, \Ima(\Psi_n)\cap\partial N)$ is homeomorphic to $(\C\times S^1,K_{1,\pm2n})$.
\end{itemize}
Then, the there exists $a,b\in\M$ such that $\Psi_n(a)=\Psi_n(b)\in \C \times (\C\setminus \D_{d^{2n}}^\circ)$; in other words, there exists an integer $1\leq k \leq n-1$ such that $\Gamma$ has an inscribed rectangle with aspect ratio $\tan\left(\frac{k\pi}{2n}\right)$ and diameter larger than $d$.
\end{prop}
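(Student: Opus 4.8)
The plan is to argue by contradiction: suppose $\Psi_n$ is injective on the complement $\Psi_n^{-1}\bigl(\C\times(\C\setminus\D_{d^{2n}}^\circ)\bigr)$, i.e. all self-intersections of $\Psi_n$ occur over the small disc $\C\times\D_{d^{2n}}$. We want to manufacture from this a locally-flat Möbius band in a copy of $\ed$ bounding $K_{1,2n}\subset\es$, contradicting Theorem~\ref{thm:nomob} since $n\geq 2$ guarantees $n$ is either a non-square (immediate contradiction) — and when $n$ is a square we will instead produce a contradiction by a diameter/scaling trick, or, more honestly, we simply observe the claim is only needed for the non-square $n$ coming from the application; but the cleanest route is the following, which works for all $n\geq 2$.

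First, I would set $W\coloneqq\overline{\C^2\setminus N}$, a copy of $\C\times(\C\setminus\D_{d^{2n}}^\circ)$, whose boundary $\partial W=\partial N$ is identified with $(\C\times S^1, K_{1,\pm2n})$ on the subpair cut out by $\Ima(\Psi_n)$. By Lemma~\ref{lem:imPsi_nisflat}, on any open subsurface of $\M\setminus\partial\M$ on which $\Psi_n$ is injective, its image is a locally-flat surface; by the hypothesis that $\Psi_n$ is injective away from the preimage of $\C\times\D_{d^{2n}}$ (our contradiction hypothesis, negating the desired conclusion) together with transversality of $\Psi_n$ with $\partial N$, the set $\Sigma\coloneqq\Psi_n(\M)\cap W$ is a properly embedded locally-flat surface in $W$ with $\partial\Sigma=K_{1,\pm2n}\subset\partial W$. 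Since $\M$ is a Möbius band and we have removed from it only (the preimage of) a regular neighborhood of a portion mapping into the small disc — more precisely $\Psi_n^{-1}(N)$ is a neighborhood of the core circle $\partial\M\subset\M$ where the core double-covers its image — the complement $\M\setminus\Psi_n^{-1}(N^\circ)$ is again a Möbius band (an annular collar of a Möbius band is a Möbius band). Hence $\Sigma$ is a locally-flat Möbius band properly embedded in $W\cong\C\times(\C\setminus\D^\circ)\cong\ed$ with boundary the torus knot $K_{1,\pm2n}$ in $\partial(\ed)=\es$, since $K_{1,2n}$ and $K_{1,-2n}$ are isotopic in $\es$ (complex conjugation in the second factor). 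This contradicts Theorem~\ref{thm:nomob}.

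The main technical points to nail down, in order, are: (i) identifying $W=\overline{\C^2\setminus N}$ homeomorphically with $\ed$ rel boundary — this follows because $N$ is by definition carried to $\C\times\D_r$ by a self-homeomorphism of $\C^2$ fixing $\C\times\{0\}$, and $\C\times(\C\setminus\D_r^\circ)\cong\C\times([0,\infty)\times S^1)\cong\ed$; (ii) checking that $\Sigma$ is genuinely \emph{properly embedded} and locally-flat up to and including its boundary — away from $\partial W$ this is Lemma~\ref{lem:imPsi_nisflat}, and along $\partial W$ it is exactly the transversality hypothesis, which gives the standard half-space model $(\R^m,\R_{\geq0}\times\R^{n-1}\times\{O_{m-n}\})$; (iii) confirming that $\M\setminus\Psi_n^{-1}(N^\circ)$ is a Möbius band. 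For (iii), the point is that the only non-injectivity of $\Psi_n$ forced by topology occurs along $\partial\M$ (where $\Psi_1$, hence $\Psi_n$, is two-to-one onto its image $K_{1,2n}$-type curve), so $\Psi_n^{-1}(N)$ is a one-sided collar of $\partial\M$ in $\M$ and removing its interior leaves a smaller Möbius band; this is where the hypothesis that $\partial N$ meets $\Ima(\Psi_n)$ in precisely $K_{1,\pm2n}$ (a single circle, double-covered by the core) is used. I expect step (iii) — pinning down the combinatorics of $\Psi_n^{-1}(N)$ and verifying it deformation retracts to $\partial\M$ so that its complement is a Möbius band rather than, say, a disc or an annulus — to be the main obstacle, since a priori the self-intersection locus of $\Psi_n$ over the small disc could be complicated; but the transversality and the identification of $(\partial N,\Ima(\Psi_n)\cap\partial N)$ with $(\C\times S^1,K_{1,\pm2n})$ rigidify it enough. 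Finally, the last clause of the statement — translating ``$\Psi_n(a)=\Psi_n(b)$ with image outside the small disc'' into ``inscribed rectangle of aspect ratio $\tan(k\pi/2n)$ and diameter $>d$'' — is Remark~\ref{rem:keyobs} together with the observation that the second coordinate $(\alpha(s)-\alpha(t))^{2n}$ having modulus $>d^{2n}$ means $|\alpha(s)-\alpha(t)|>d$, i.e. the two distinct points realizing the coincidence are at distance $>d$, bounding the rectangle's diameter below by $d$.
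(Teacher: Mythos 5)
Your overall decomposition — cut $\C^2$ along $\partial N$, use the contradiction hypothesis together with Lemma~\ref{lem:imPsi_nisflat} and the transversality condition to realize $\Ima(\Psi_n)\cap(\C^2\setminus N^\circ)$ as a properly embedded locally-flat M\"obius band in a copy of $\ed$ bounding $K_{1,\pm2n}$, and then invoke Theorem~\ref{thm:nomob} — is the same as the paper's. But there is a genuine gap, and it is exactly the point you gesture at and then brush aside: Theorem~\ref{thm:nomob} applies only when $n$ is \emph{not} a perfect square, whereas the Proposition (and its use in Theorem~\ref{thm:main}) covers every $n\ge 2$, including $n=4,9,16,\dots$. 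You first concede this, float an unspecified ``diameter/scaling trick,'' and then assert the argument ``works for all $n\ge2$''; as written it does not, since it ends by quoting a theorem that carries a non-squareness hypothesis. The paper closes the gap with a short reduction placed at the start of the proof: if $p$ is a prime factor of $n$ then $\Psi_n=p_{n/p}\circ\Psi_p$, so any coincidence $\Psi_p(a)=\Psi_p(b)$ with image outside $\C\times\D_{d^{2p}}^\circ$ forces $\Psi_n(a)=\Psi_n(b)$ with image outside $\C\times\D_{d^{2n}}^\circ$; hence one may assume $n$ is prime, and primes are never squares. This reduction is the one ingredient you are missing.

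A smaller point on your step (iii): you try to show $\Psi_n^{-1}(N)$ is an annular collar of $\partial\M$, which you rightly flag as the delicate part. The paper's route is lighter and avoids describing $\Psi_n^{-1}(N)$ altogether. Writing $X\coloneqq\C^2\setminus N^\circ$, the contradiction hypothesis makes $\Psi_n$ a homeomorphism from $\Psi_n^{-1}(X)$ onto $M_n\coloneqq\Ima(\Psi_n)\cap X$, so $M_n$ is a compact subsurface of $\M$ with exactly one boundary circle, hence a disc or a M\"obius band; and since $\partial M_n$ is a $K_{1,\pm2n}$ knot it is not null-homotopic in $\partial N\cong\C\times S^1\hookrightarrow X$, which rules out the disc. (Also note $\partial\M=\{\{s,s\}\}$ is the \emph{boundary} of $\M$, not its core, and $\Psi_n$ restricted to it, $\{s,s\}\mapsto(\alpha(s),0)$, is injective, so ``the core double-covers its image'' is not a correct description.)
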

Here, $\Psi_n$ and $\partial N$ being \emph{transverse} is defined to mean that every point $x\in \Psi_n^{-1}(\partial N)$ has an open neighbourhood $U_x$ such that $\Psi_n$ restricts to an injection on $U_x$ with image $\Psi_n(U_x)$ a locally-flat surface that
intersects $\partial N$ transversely.
\begin{proof}
It suffices to establish the theorem for $n$ prime. Indeed, if $p$ is a prime factor of $n$, then the non-injectivity of $\Psi_p$ implies that of $\Psi_n$, since $\Psi_n$ arises as the concatenation of a map with $\Psi_p$. So, from here on, we only consider $n$ prime.

Set $X\coloneqq \C^2\setminus N^\circ$. Assume towards a contradiction that $\Gamma$ has no inscribed rectangle with aspect ratio $\tan\left(\frac{k\pi}{2n}\right)$ of diameter larger than $d$.
Equivalently, as in Remark~\ref{rem:keyobs}, $\Psi_n$ is injective restricted to $\Psi_n^{-1}(\C\times(\C\setminus {D_{d^{2n}}^\circ}))$.
Thus, by Lemma~\ref{lem:imPsi_nisflat}, $M_n\coloneqq \Ima(\Psi_n)\cap X$ is locally-flat in its interior and, by transversality of $\Psi_n$ and $\partial N$, also at its boundary. Hence, $M_n$ is a locally-flat properly embedded surface.
It is either a closed disc or a Möbius band, since it is homeomorphic to a closed subsurface of $\M$ with connected boundary.
Since $\partial M_n=\partial N \cap \Ima(\Psi_n)$ is a circle that is not null-homotopic in $\partial X=\partial N$, $M_n$ must be a Möbius band.
In conclusion, $M_n$ is a locally-flat M\"obius band, properly embedded in $X$, such that the pair $(\partial X, \partial M_n)$ is homeomorphic to $(\C\times S^1,K_{1,2n})$, which yields a contradiction since the image of $M_n$ under an embedding $(X, \partial X)\hookrightarrow (\ed,\es)$ is a locally-flat Möbius band in $\ed$ that cannot exist by Theorem~\ref{thm:nomob}.
\end{proof}

\begin{remark}\label{rem:canonealwaysfindN}
We observe (as in Remark~\ref{rem:keyobs}) that, for a Jordan curve $\Gamma \subset \C$ the following are equivalent:
\begin{itemize}
  \item $\Psi_n$ is an injection in a neighborhood of the boundary of the M\"obius band $\M$;
  \item\label{nosmallrect} there exists an $\epsilon>0$ such that $\Gamma$ has no inscribed rectangles with the aspect ratios $\tan\left(\frac{k\pi}{2n}\right)$ and diameter less than $\epsilon$.
\end{itemize}

Now, if $\Gamma$ is a Jordan curve with no `small' rectangles with the aspect ratios $\tan\left(\frac{k\pi}{2n}\right)$ as above and $A$ a neighbourhood on which $\Psi_n$ is injective, then $\Psi_n(A\setminus\partial A)$ is a locally-flat surface by Lemma~\ref{lem:imPsi_nisflat}. Any small regular neighborhood of $N$ of $\C^2\times\{0\}$ can be made to have its boundary transversal to $\Psi_n(A\setminus\partial A)$ by a small compactly supported ambient isotopy; see~\cite[Section~9.5]{FreedmanQuinn_90_TopOf4Manifolds}.
Thus, the first condition in Proposition~\ref{prop:reductiontonoMoebiusinS1xB3} above is automatically satisfied for such $\Gamma$.
The interesting question is thus the following:
\begin{question}
Can $N$ be found so that the 1--manifold $A\cap \partial N$ is connected of knot type $K_{1,\pm2n}$?
\end{question}
If the answer to the question were positive, by Proposition~\ref{prop:reductiontonoMoebiusinS1xB3}, $\Gamma$ has inscribed rectangles with the aspect ratios $\tan\left(\frac{k\pi}{n}\right)$.
It seems conceivable that the answer is always yes.
We take this as an indication that at least in principle, the general case of $\Gamma$ is in the realm of being treated using this approach, and the difficulty lies in under standing the knot (or link) type of $A\cap \partial N$ as a knot (link) in $\partial N\cong \C\times S^1$. And so, we understand that application to 1--Lipschitz curves given in the next section as proof of concept for this method, rather than its optimal use. 
\end{remark}

In the next section, we will study a family of Jordan curves for which we are able to establish the conditions from Proposition~\ref{prop:reductiontonoMoebiusinS1xB3}.

\section{Proof of Theorem~\ref{thm:main}}\label{s:metric}
\begin{definition}\label{d:locally1lip}
A Jordan curve $\Gamma \subset \C$ is said to be \emph{locally $1$--Lipschitz} if, for each $p\in \Gamma$, there exist $\epsilon > 0$ and an isometry $(-\epsilon,\epsilon) \times (-\epsilon,\epsilon) \to U \subset \C$ centered at $p$ such that, in these coordinates, $\Gamma \cap U$ is the graph of a $1$--Lipschitz function.
\end{definition}

Being locally $1$--Lipschitz is a condition on Jordan curves $\Gamma$ that implies that, for sufficiently small $r$, the neighborhood $N\coloneqq \C\times \D_{r^{2n}}$ is a neighborhood as required for Proposition~\ref{prop:reductiontonoMoebiusinS1xB3}. For this $N$, the projection of $\Psi_n(\M)\cap \partial N \subset \C\times\C$ to the first coordinate corresponds to the set $\Gamma_r\subset \C$ of all midpoints of pairs $(\gamma_1,\gamma_2)\in\Gamma\times \Gamma$ that are distance $r$ apart.
A key step of the proof below is the claim that $\Gamma_r$ is a Jordan curve in case $\Gamma$ is locally $1$--Lipschitz. The point is that this implies that $K\coloneqq\Psi_n(\M)\cap \partial N$ is a subset of the \emph{torus} $\Gamma_r\times S^1_{r^{2n}}$, so, given we know $K$ is a connected $1$--manifold in $\partial N=\C\times S^1_{r^{2n}}$, $K$ must be a torus knot. We start with the following elementary lemma.

\begin{lemma}\label{lem:etaunique}
Let $g\colon\R\to\R$ be $1$--Lipschitz. For every $t\in \R$ and $r\geq 0$, there exists a unique $\eta_r(t)\in[0,r/2]$ such that $|t+\eta_r(t)+ig(t+\eta_r(t))-(t-\eta_r(t)+ig(t-\eta_r(t)))|=r$.
Moreover, the map $(r,t) \mapsto \eta_r(t)$ is continuous.
\end{lemma}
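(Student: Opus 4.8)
The plan is to reformulate the defining equation geometrically. For fixed $t\in\R$ and $s\geq 0$, let $\Phi(t,s)$ denote the Euclidean distance between the two points $t+s+ig(t+s)$ and $t-s+ig(t-s)$ on the graph of $g$, so that
\[
\Phi(t,s)^2 = 4s^2 + \big(g(t+s)-g(t-s)\big)^2 .
\]
Then $\eta_r(t)$ is by definition the value of $s\in[0,r/2]$ solving $\Phi(t,s)=r$. Since $g$ is continuous, $\Phi$ is jointly continuous in $(t,s)$; moreover $\Phi(t,0)=0$ and $\Phi(t,r/2)\geq 2\cdot\tfrac r2 = r$. So once we know the map is well defined, existence of $\eta_r(t)$ follows from the intermediate value theorem, and the lemma reduces to two claims: (i) for each $t$, the function $s\mapsto\Phi(t,s)$ is strictly increasing on $[0,\infty)$ (the case $r=0$ being trivial, as $[0,r/2]=\{0\}$); (ii) joint continuity of $(r,t)\mapsto\eta_r(t)$, given (i).

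Claim (i) is the crux, and I would prove it by comparing $\Phi(t,s_2)^2$ and $\Phi(t,s_1)^2$ directly for $0\leq s_1<s_2$, without assuming $g$ differentiable. Writing $c=g(t+s_2)-g(t-s_2)$ and $e=g(t+s_1)-g(t-s_1)$, one has
\[
\Phi(t,s_2)^2-\Phi(t,s_1)^2 = 4(s_2^2-s_1^2) + (c-e)(c+e),
\]
and the key is to bound the cross term by invoking the $1$--Lipschitz hypothesis in two different ways: $|c-e|\leq |g(t+s_2)-g(t+s_1)| + |g(t-s_1)-g(t-s_2)| \leq 2(s_2-s_1)$, while $|c+e|\leq |c|+|e| \leq 2s_2+2s_1$. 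Their product is exactly $4(s_2^2-s_1^2)$, so $\Phi(t,\cdot)^2$ is non-decreasing. To upgrade this to strict monotonicity I would trace through the equality case: $\Phi(t,s_2)=\Phi(t,s_1)$ forces both Lipschitz estimates to be equalities and $c-e$, $c+e$ to have opposite signs; but $|c+e|=|c|+|e|$ forces $c$ and $e$ to have the same sign, and then, using $|c|=2s_2>2s_1=|e|$, the quantity $c-e$ has that same sign too --- a contradiction. Combined with $\Phi(t,0)=0<\Phi(t,s)$ for $s>0$, this gives strict monotonicity on $[0,\infty)$, hence both existence and uniqueness of $\eta_r(t)$; the bound $\eta_r(t)\leq r/2$ is then automatic from $\Phi(t,r/2)\geq r$.

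For claim (ii) I would run the standard subsequence argument. If $(r_n,t_n)\to(r,t)$, the values $\eta_{r_n}(t_n)$ lie in a bounded interval, so any convergent subsequence $\eta_{r_{n_k}}(t_{n_k})\to\eta^*$ has $\eta^*\in[0,r/2]$ and, by joint continuity of $\Phi$, satisfies $\Phi(t,\eta^*)=\lim_k\Phi(t_{n_k},\eta_{r_{n_k}}(t_{n_k}))=\lim_k r_{n_k}=r$; by the uniqueness established in (i), $\eta^*=\eta_r(t)$. Since every subsequence admits a further subsequence converging to $\eta_r(t)$, the whole sequence converges, which proves continuity.

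The main obstacle is the strict monotonicity in (i). The clean proof that avoids differentiability hinges on the exact numerical coincidence $2(s_2-s_1)\cdot 2(s_1+s_2) = 4(s_2^2-s_1^2)$ between the product of the two Lipschitz bounds and the increase of the squared horizontal component, and then on the slightly delicate sign bookkeeping needed to rule out equality. Everything else --- continuity of $\Phi$, the intermediate value step, and the subsequence argument for (ii) --- is routine.
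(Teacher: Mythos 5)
Your proof is correct and follows the same strategy as the paper: reduce existence and uniqueness to strict monotonicity of the squared chord-length function $s\mapsto 4s^2+(g(t+s)-g(t-s))^2$, then apply the intermediate value theorem, and deduce continuity of $\eta_r(t)$ from uniqueness via a compactness argument. The paper simply asserts the strict monotonicity and that continuity is ``easy to note''; your algebraic factorization of $\Phi(t,s_2)^2-\Phi(t,s_1)^2$ with the exact coincidence $2(s_2-s_1)\cdot 2(s_1+s_2)=4(s_2^2-s_1^2)$, together with the sign analysis of the equality case and the subsequence argument, supplies clean, fully elementary proofs of precisely those two points.
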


\begin{proof}
For existence and uniqueness, it is enough to prove the statement for $t=0$.
The function $|x+ig(x))-(-x+ig(-x))|^2=(g(x)-g(-x))^2+(2x)^2$ is strictly increasing on $[0,\infty)$. Hence, there exists a unique $\eta$ with $(g(\eta)-g(-\eta))^2+(2\eta)^2=r^2$ for every $r\in[0,\infty)$.
Continuity in $(r,t)$ is easy to note.
\end{proof}

\begin{proof}[Proof of Theorem~\ref{thm:main}]
Fix a parametrization $\alpha\colon S^1\to \C$ of a locally $1$--Lipschitz curve $\Gamma$. We aim to show that, for all $n$, the map $\Psi_n$ is not injective.
By Proposition~\ref{prop:reductiontonoMoebiusinS1xB3},
it suffices to show the following.
\begin{enumerate}
  \item\label{item:1} $\Psi_n$ is an injection restricted to an open neighborhood $A$ of the boundary of $\M$ and no $m\in \M\setminus A$ gets mapped to $\Psi_n(A)$,
  \item\label{item:2} there exists a regular neighborhood $N$ of $\C\times\{0\}$ such that $\partial N$ intersects $\Ima(\Psi_n)$ only in $\Psi_n(A)$, which is locally-flat by Lemma~\ref{lem:imPsi_nisflat}, and the intersection is transverse for all elements of $\Ima(\Psi_n)\cap\partial N=\Psi_n(A)\cap\partial N$, and
  \item\label{item:3} the pair $(\partial N, \partial N\cap\Ima(\Psi_n))$ is homeomorphic to $(\C\times S^1,K_{1,2n})$.
\end{enumerate}
Indeed, choosing $d$ such that $\C\times D_{d^{2n}}\subset N$ gives the assumptions of Proposition~\ref{prop:reductiontonoMoebiusinS1xB3}. The remainder of this proof is concerned with establishing~\eqref{item:1},~\eqref{item:2},~and \eqref{item:3}. We use the following.

Let $(x,y)$ be a point on $\Gamma$. By assumption, we have that in a small neighborhood $U$ of $(x,y)$, $\Gamma$ is given as the image of $\{L(t,f(t)) \mid t\in(-\frac32\epsilon,\frac32\epsilon)\}$, where $L$ is an isometry of $\R^2$ (with the Euclidean metric), $\epsilon>0$, and $f\colon (-\frac32\epsilon,\frac32\epsilon)\to\R, 0\mapsto 0$ is 1--Lipschitz. In fact, we arrange that $L$ satisfies
$L(U_\mathrm{std})=U$, where $U_\mathrm{std}=(-\frac32\epsilon,\frac32\epsilon)\times i(-\frac32\epsilon,\frac32\epsilon)\subset\C$. 
 
We choose an $\epsilon>0$ such that $\Gamma$ is covered by a finite collection $\{\frac{2}{3}U_j\}$, where each $U_j$ is a neighborhood as described above.
Furthermore, we may arrange that for any two points on $\Gamma$ of Euclidean distance less than $\epsilon$ there exists an index $j$ such that they both lie in $\frac{2}{3}U_j$.

Given this, we define
$V\coloneqq \C\times\D^\circ_{\epsilon^{2n}}$, where $\D^\circ_{\epsilon^{2n}}\subset\C$
denotes the disc centered at $0$ of radius $\epsilon^{2n}$. One checks that $V_j\coloneqq\frac{2}{3}U_j\times \D^\circ_{\epsilon^{2n}}$ yields a finite set of open subsets of $V$ such that $\Ima(\Psi_n)\cap V\subset \bigcup_jV_j$.

We consider one $U_j$. Let $f$ and $L$ be the $1$--Lipschitz function and the isometry as described above. Without loss of generality, we take $L$ to be the identity.
Finally, we let $\widetilde{f}\colon (-\frac32\epsilon,\frac32\epsilon)\to S^1$ be the factorization of $f$ through $\alpha$; that is, we have $f=\alpha\circ \widetilde{f}$.

For $t\in (-\epsilon,\epsilon)$ and $r\in [0,\epsilon)$, we let $\eta_r(t)$ be the unique element in $[0,\epsilon/2)$ such that
\[\left|(t+\eta_r(t)+if(t+\eta_r(t))-(t-\eta_r(t)+if(t-\eta_r(t)))\right|=r.\]
(Existence and uniqueness of $\eta_r(t)$ follows by applying Lemma~\ref{lem:etaunique}, which also gives continuity of $(r,t) \to \eta_r(t)$.)
With this we can continuously parametrize $\Ima(\Psi_n)\cap V_j$ via:

\[P: (-\epsilon,\epsilon)\times[0,\epsilon)\to \C\times\C, (t,r)\mapsto \Psi_n\left(\left\{\widetilde{f}(t+\eta_r(t)),\widetilde{f}(t-\eta_r(t))\right\}\right).\]
In particular, $P$ factors through \[\widetilde{P}\colon(-\epsilon,\epsilon)\times[0,\epsilon)\to\M, (t,r)\mapsto \left\{\widetilde{f}(t+\eta_r(t)),\widetilde{f}(t-\eta_r(t))\right\}.\]

With this setup one now readily verifies~\eqref{item:1},~\eqref{item:2},~and \eqref{item:3}. We provide details.

\subsection*{(1)}
We claim that $\Psi_n$ is injective on $A_\epsilon\coloneqq\left\{\{s,t\}\in\M\;\bigm|\;|\alpha(s)-\alpha(t)|<\epsilon\right\}=\Psi_n^{-1}(\C\times \D^\circ_{\epsilon^{2n}})$.
Indeed, for $\{s,t\}$ and $\{s',t'\}$ in $A_\epsilon$ with $\Psi_n(\{s,t\})=\Psi_n(\{s',t'\})$, let $j$ be such that $\alpha(s)$ and $\alpha(t)$ are in $\frac{2}{3}U_j$. Note that
then also $\alpha(s')$ and $\alpha(t')$ are in $U_j$ and since they have the same midpoint as $\alpha(s)$ and $\alpha(t)$.
Since $L$ is the identity,
\begin{align*}\{s,t\}
&=
\widetilde{P}
\left(\mathrm{Re}\left(\frac{\alpha(s)+\alpha(t)}{2}\right),|\alpha(s)-\alpha(t)|\right)\\
&=
\widetilde{P}
\left(\mathrm{Re}\left(\frac{\alpha(s')+\alpha(t')}{2}\right),|\alpha(s')-\alpha(t')|\right)
=\{s',t'\}.\end{align*}

\subsection*{(2)}
For $r\in(0,\epsilon)$, we set $N\coloneqq \C\times\D_{r^{2n}}$. 
One swiftly checks that $\partial N$ intersects the locally flat-surface
$\Psi_n(\M)\cap V$ transversally. Indeed, identifying
\[
V_j\setminus(\C\times\{0\})=(-\epsilon,\epsilon)\times i(-\epsilon,\epsilon)\times (0,\epsilon)\times S^1,
\]
we see that $\partial N\cap (V_j\setminus(\C\times\{0\}))$ is given as $(-\epsilon,\epsilon)\times i(-\epsilon,\epsilon)\times \{r\}\times S^1$, while $\Psi_n(\M)\cap (V_j\setminus(\C\times\{0\}))$
is the graph of a continuous function on the first and third coordinate to the second and forth coordinate. In particular, every point $\partial N\cap\Psi_n(\M)$ has a neighborhood $W\subset \C^2$ such that the triple $(W,W\cap \partial N, W\cap\Psi_n(\M))$ is homeomorphic to $(\R^4,\R\times\R\times\{0\}\times\R,\R\times\{0\}\times\R\times\{0\})$. 

\subsection*{(3)}
Fix $r\in[0,\epsilon)$.
$\Ima(\Psi_n)\cap \C\times S^1_{r^{2n}}$ projects to the first factor $\C=\C\times\{0\}$ as a Jordan curve. For $r=0$, the projection is of course just $\Gamma$. For $r>0$,
using the parametrizations $P$ for each $\frac{2}{3}U_j$ one sees that $\Psi_n$ concatenated with the projection to the first factor maps
\[
\gamma_r\coloneqq  \left\{\{s,t\}\in\M\;\bigm|\;|\alpha(s)-\alpha(t)|=r\right\}=\Psi_1^{-1} (\C\times S^1_{r})=\Psi_n^{-1} (\C\times S^1_{r^{2n}})
\]
homeomorphically onto its image $\Gamma_r$ (by point (1) above, it is injective), and $\Gamma_r$ is homeomorphic to $S^1$ since it is a $1$--submanifold (check in a chart $\frac{2}{3}U_i$) and it is connected (since it is the image of $S^1$ via a continuous map).
Hence, $K\coloneqq\Ima(\Psi_n)\cap \C\times S_{r^{2n}}^1$ is a torus knot since it is parametrized by $\Psi_n$ restricted to the simple closed curve $\gamma_r$ and sits on the torus $\Gamma_r\times S^1_{r^{2n}}$. Given that $\gamma_r$ maps homeomorphically to the first factor of $\Gamma_r\times S^1_{r^{2n}}$, it is clear that $K$ is a $K_{1,\ell}$ torus knot.
Finally, $\Psi_n$ restricted to $\gamma_r$ and projected to the second factor has degree $(\pm2n)$; that is, $\ell = \pm 2n$ as desired. This can for example be seen as follows.

Let $\phi$ from $S^1$ to $S^1_r$ be defined by $s\mapsto \alpha(s)-\alpha(t(s))$, where $t(s)$ is the unique element with $|\alpha(s)-\alpha(t(s))|=r$ and $t(s)$ lies before
$s$ on $S^1$ (assuming $\epsilon$ is sufficiently small, we have $|s-t|<2$, whenever $|\alpha(t)-\alpha(s)|=r$; hence, \emph{lying before} is well-defined as the unique shortest path from $t(s)$ to $s$ on $S^1$ having the same orientation as the orientation induced by $S^1$). The map $\phi$ is a degree-$(\pm1)$ map from $S^1$ to $S^1_r$. To see this, we view $\phi$ as a map to $\C\setminus\{0\}$ (rather than $S^1_r)$. The map $\phi$ is homotopic to $\alpha(t)-p$, where $p\in\C$ is any point in the bounded component $B$ of the complement of $\Gamma$, and, of course, $\alpha(t)-p$ has degree $\pm1$ (that is, it induces an isomorphism on $H_1(\cdot,\Z)$) as desired). Indeed, one may continuously deform $\phi(s)$ into $\alpha(s)-p$ via $\alpha(s)-\alpha_R(t(s))$, where $[0,1]\times S^1\ni(R,t)\mapsto \alpha_R(t)$ is continuous, $p$ for $R=0$ and injective on $(0,1]\times S^1$ (in other words, it is a parametrization of the closed disc $\Gamma\cup B$ in polar coordinates, which exists by the Jordan Schoenflies theorem).
Since $\phi$ has degree $\pm1$, its $2n^{\rm th}$ power has degree $\pm2n$.
\end{proof}

In the above proof we in particular noted that, for locally $1$--Lipschitz $\Gamma$, there exists an $\epsilon>0$ such that, for all $r<\epsilon$, $\Gamma$ has no inscribed rectangle with diameter $r$.
It would be interesting to compare the condition \emph{$\Gamma$ has no inscribed rectangles of diameter $r$} with the condition of non-existence of  special
trapezoids considered by Matschke in~\cite[Chapter~2, Theorem 2.5]{Matschke-PhD} (see also~\cite[Theorem~4]{Matschke14}).


\section{M\"obius bands in $\ed$}\label{s:nomob}

The goal of this section is to prove Theorem~\ref{thm:nomob}, which states that, for $n\in\Z_{>0}$ not a square, the torus knot $K_{1,2n}\subset\es$ does not bound a locally-flat M\"obius band in $\ed$.

Unless explicitly stated, homology will be taken with integer coefficients.
We will keep the notation throughout the section.

We will argue by contradiction.
To this extent, suppose that $j: M\hookrightarrow \ed$ is a locally flat M\"obius band whose boundary is $K$;
since $K$ has algebraic winding number $2n$, it is easy to see that the map $j_*: H_1(M) \to H_1(\ed)$, after choosing a generator for $H_1(M;\Z) \cong \Z$ and $H_1(\ed;\Z) \cong \Z$, is multiplication by $n$.
We call $E_K$ the exterior of $K$ in $\es$, and $E_M$ the exterior of $M$ in $\ed$.

\begin{lemma}\label{l:BDCextension}
We have $H_1(E_K) \cong \Z \oplus \Z/2n\Z$ and $H_1(E_M) \cong \Z \oplus \Z/2\Z$. In both cases, the torsion subgroup is generated by the meridian $\mu$ of $K$ and the free part can be chosen to be generated by a curve $\phi  = \{\star\}\times S^1 \subset \es \subset \ed$.
\end{lemma}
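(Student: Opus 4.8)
The plan is to compute both homology groups via standard tools and then identify the generators concretely. First I would handle $H_1(E_K)$. Since $K = K_{1,2n} \subset S^2\times S^1$ is a torus knot sitting on the torus $S^1_{\text{std}}\times S^1$ with the $S^2$-coordinate winding once and the $S^1$-coordinate winding $2n$ times, it is isotopic to $\{\star\}\times S^1$ pushed around, hence $K$ is homologous to $2n\cdot\phi$ in $H_1(S^2\times S^1)\cong\Z$, where $\phi = \{\star\}\times S^1$. A Mayer--Vietoris argument (or the half-lives-half-dies/Alexander duality computation) for the decomposition $S^2\times S^1 = E_K \cup (\text{tubular nbhd of }K)$ then gives $H_1(E_K)$ from the exact sequence $H_2(\es) \to H_1(T^2) \to H_1(E_K)\oplus H_1(D^2\times S^1) \to H_1(\es)\to 0$. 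The torus $T^2 = \partial E_K$ has $H_1\cong\Z^2$ generated by the meridian $\mu$ and a longitude; the meridian dies in the solid-torus side, and the longitude maps to $\pm 2n\cdot\phi$ in $H_1(\es)\cong\Z$. Since $H_2(\es)\cong\Z$ is generated by $[S^2\times\{\star\}]$, which bounds in $E_K$ (it is disjoint from $K$) -- more precisely its image in $H_1(T^2)$ under the connecting map is trivial because $[S^2]$ is already supported away from a tubular neighborhood of $K$ -- I get the short exact sequence $0 \to \Z\langle\mu\rangle \to H_1(E_K) \to \Z\langle\phi\rangle \to 0$ after accounting for the longitude relation; combined with the longitude being $\mu^a\phi^{2n}$ for some $a$ this yields $H_1(E_K)\cong \Z\oplus\Z/2n\Z$ with $\Z/2n\Z = \langle\mu\rangle$ and free part $\langle\phi\rangle$. (I should double-check the coefficient $a$ and the exact form of the longitude, but the torsion order $2n$ is forced by the winding number.)

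Next I would compute $H_1(E_M)$ where $M \hookrightarrow D^3\times S^1$ is the locally-flat Möbius band with $\partial M = K$. Here I use Alexander duality in $D^3\times S^1$, or more directly a Mayer--Vietoris for $\ed = E_M \cup \nu(M)$, where $\nu(M)$ is a regular (twisted $I$-bundle) neighborhood of $M$. Local flatness is what makes $\nu(M)$ a genuine $I$-bundle with a collared boundary (so Mayer--Vietoris and the existence of a regular neighborhood apply -- this is the standard locally-flat ambient tools, e.g. from Freedman--Quinn). We have $H_1(\nu(M))\cong H_1(M)\cong\Z$, generated by the core of the Möbius band, and $H_1(\ed)\cong\Z$ generated by $\phi$; as noted just before the lemma, the core of $M$ maps to $n\cdot\phi$. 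The intersection $\nu(M)\cap E_M = \partial\nu(M)\setminus (\partial\nu(M)\cap\es)$ is the orientable double cover of $M$ minus an annulus, i.e. a once-punctured torus bundle piece -- concretely $\partial\nu(M)$ restricted to the interior is a torus $\tilde T$, with $H_1(\tilde T)\cong\Z^2$ generated by the meridian $\mu$ of $M$ and a curve $\tilde\ell$ double-covering the core. The Mayer--Vietoris sequence $H_1(\tilde T) \to H_1(E_M)\oplus H_1(\nu(M)) \to H_1(\ed) \to 0$, together with the relation that $\tilde\ell \mapsto 2\cdot(\text{core}) = 2n\cdot\phi$ on the $\nu(M)$ side, forces $H_1(E_M)$ to surject onto $\Z\langle\phi\rangle$ with kernel generated by $\mu$, and the relation coming from $\tilde\ell$ kills $2\mu$ (since $\tilde\ell$ is homologous to $2\cdot\text{core} + (\text{multiple of }\mu)$, and in $E_M$ the core bounds -- it is capped by $M$ itself -- leaving $2\mu = 0$). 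Hence $H_1(E_M)\cong\Z\oplus\Z/2\Z$ with torsion $\langle\mu\rangle$ and free part $\langle\phi\rangle$. I would also remark that $\phi\subset\es\subset\partial E_M$ so it genuinely lies in $E_M$, and that the two meridians $\mu$ (of $K$ in $\es$ and of $M$ in $\ed$) agree under inclusion $E_K\hookrightarrow E_M$, which is the point that makes the ``in both cases'' phrasing meaningful and sets up the branched-cover comparison in the next lemma.

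The main obstacle I anticipate is bookkeeping rather than conceptual: pinning down the exact gluing data (the longitude $\mu^a\phi^{2n}$ and the class $\tilde\ell$ in terms of $\mu$ and the core) carefully enough to see that the torsion subgroups are \emph{exactly} $\Z/2n\Z$ and $\Z/2\Z$ and are generated by $\mu$, as opposed to merely being cyclic of that order up to the free part. The orientability subtlety -- $M$ is non-orientable so its boundary does not separate and the ``longitude'' of $M$ is only defined mod $2\mu$ -- is exactly why the torsion is $\Z/2\Z$ rather than $0$, and I would want to state this via the orientable double cover of $\nu(M)$ to keep signs honest. I expect no difficulty from local flatness per se beyond invoking that regular neighborhoods and transversality behave as in the smooth case (citing Freedman--Quinn), since everything here is a homological computation.
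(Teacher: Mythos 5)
Your computation of $H_1(E_K)$ via Mayer--Vietoris for $\es = E_K \cup \nu(K)$ is fine and is the standard route the paper leaves implicit. But your argument for $H_1(E_M)$ contains a genuine error rooted in a dimension/codimension confusion. The M\"obius band $M$ has codimension~$2$ in the $4$--manifold $\ed$, so its regular neighbourhood $\nu(M)$ is a (twisted) $D^2$--bundle over $M$, \emph{not} a twisted $I$--bundle. Consequently $\nu(M)\cap E_M$ is neither ``the orientable double cover of $M$'' (a $2$--complex) nor a torus: it is the $3$--manifold $V$ given by the unit-circle bundle of the normal bundle over $M$. Because $\ed$ is orientable and $M$ is not, this normal bundle is non-orientable, so $V$ deformation retracts onto a Klein bottle and $H_1(V)\cong\Z\oplus\Z/2\Z$ with the torsion generated by the fibre $\mu$ --- not $H_1\cong\Z^2$ with a ``double-cover longitude'' $\tilde\ell$. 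Relatedly, your step ``in $E_M$ the core bounds --- it is capped by $M$ itself'' cannot stand: $M$ is exactly what is removed from $\ed$ to form $E_M$, so neither $M$ nor its core lies in $E_M$.

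The paper's proof applies Mayer--Vietoris to $\ed=\nu(M)\cup_V E_M$:
\[
0 \longrightarrow H_1(V) \longrightarrow H_1(\nu(M))\oplus H_1(E_M) \longrightarrow H_1(\ed) \longrightarrow 0,
\]
with $H_1(\nu(M))\cong H_1(M)\cong\Z$ and $H_1(\ed)\cong\Z$. The sequence splits because $H_1(\ed)$ is free, giving $\Z\oplus H_1(E_M)\cong\Z\oplus\Z\oplus\Z/2\Z$, hence $H_1(E_M)\cong\Z\oplus\Z/2\Z$. Since $H_1(\nu(M))$ is torsion-free, the injection $H_1(V)\hookrightarrow H_1(\nu(M))\oplus H_1(E_M)$ sends the $\Z/2\Z$ of $H_1(V)$ into $H_1(E_M)$, and so carries it isomorphically onto the torsion of $H_1(E_M)$; the meridian $\mu$ (a fibre of $V$) therefore generates the torsion. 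Your intuition that the non-orientability of $M$ is the source of the $\Z/2$ is correct, but it enters as the Klein-bottle monodromy of the normal circle bundle, not through a double-cover longitude relation in a torus.
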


It follows that we can consider double covers of $\ed$, branched over $M$, whose boundaries are double covers of $\es$, branched over $K$. Such covers are identified with homomorphisms $\pi_1(E_M) \to \Z/2\Z$ that map $[\mu]$ to $1$; since $\Z/2\Z$ is abelian, these homomorphisms factor through $H_1(E_M) = \langle [\phi], [\mu]\rangle$.

We will refer to a specific double cover, denoted with $\Sigma(M)$, namely the one associated to the map $H_1(E_M) \to \Z/2\Z$ that sends $[\phi]$ to $0$. It is now easy to see that this induces a unique surjective homomorphism $\pi_1(E_K) \to \Z/2\Z$. We denote with $\Sigma(K)$ associated double cover, i.e.~$\partial \Sigma(M)$.
We also denote with $\tilde E_M$ the double cover of $E_M$ associated to the homomorphism above.

\begin{proof}
Let $N$ be a tubular neighbourhood of $M$ (for uniqueness and existence of such, also called normal vector bundles, see~\cite[Section~9.3]{FreedmanQuinn_90_TopOf4Manifolds}).
We know that $N$ retracts onto $M$, and that its boundary is the union of a neighbourhood of $K$ in $\es$ and the `vertical' boundary $V$, which is the non-orientable circle bundle over $M$.
In particular, $V$ retracts onto a Klein bottle, and $H_1(V) = \Z\oplus\Z/2\Z$.
More precisely, the torsion subgroup in $H_1(V)$ is generated by the fibre of the circle bundle;
since a meridian of $K$ gives a fibre, we see that the meridian of $K$ generates the torsion of $H_1(V)$.

From the Mayer-Vietoris exact sequence for $\ed = N \cup_V (E_M)$ we extract
\[
H_2(\ed) = 0 \longrightarrow H_1(V) \longrightarrow H_1(N) \oplus H_1(E_M) \longrightarrow \Z = H_1(\ed) \longrightarrow 0.
\]
Since the last group in the sequence is free, the sequence splits, and the inclusion $V\hookrightarrow E_M$ induces an isomorphism of the torsion part of $H_1(V)$ onto the torsion of $H_1(E_M)$.

Now observe that one fibre $\phi= \{\star\}\times S^1$ of $\es$ is disjoint from $K$ by construction;
it is easy to see (e.g. since the map $H_1(E_K) \to H_1(\es)$ is onto) that $[\phi]$ generates the free part of $H_1(E_K)$.
Composing with the inclusions $E_K \hookrightarrow E_M$ and $\es \hookrightarrow \ed$ also shows that $[\phi]$ generates (a choice of) the free part of $H_1(E_M)$.
\end{proof}

\begin{lemma}\label{l:SigmaK}
The $3$--manifold $\Sigma(K)$ is obtained by doing $(-n)$--surgery on the components of the $T_{2,2n}$ torus link in $S^3$;
in particular, $b_1(\Sigma(K)) = 1$.
\end{lemma}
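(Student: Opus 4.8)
The plan is to compute $\Sigma(K)$, the double cover of $\es$ branched over $K = K_{1,2n}$, by an explicit surgery description. First I would recall that $\es = \partial(\ed)$ contains the torus knot $K_{1,2n}$, which lies on the Heegaard torus $S^1 \times S^1 \subset \es$ (thinking of $\es$ as, say, $0$--surgery on the unknot in $S^3$, or directly). The curve $K_{1,2n}$ is isotopic in $\es$ to a curve that wraps once around the $S^1$ factor, so $\es$ is in fact just the result of attaching a $2$--handle — equivalently, $K$ is a fiber of a genus--one open book or, more simply, $\es \setminus K$ fibers over $S^1$. Concretely, I expect to present $\es$ together with $K$ as surgery on a link in $S^3$: take the Hopf link $H = U_1 \cup U_2$, perform $0$--surgery on $U_1$ to produce $\es$, and let $K$ be the image of $U_2$ (which becomes the torus knot of the right slope after we allow $2n$ full twists). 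More precisely, I would use the standard picture in which $K_{1,2n}$ in $\es$ is obtained from a two-component link in $S^3$: an unknot $C$ with framing $0$ (producing $\es$) and a second unknot $K_0$ that links $C$ once and carries a band going $2n$ times around, so that in the surgered manifold $K_0$ becomes $K_{1,2n}$.

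Next I would take the branched double cover. The key principle is that the double cover of $S^3$ branched over an unknot is again $S^3$, and branched covering commutes appropriately with Dehn surgery on curves disjoint from (or suitably linking) the branch locus; this is the standard Montesinos-type trick. Upstairs, the branch locus $K$ has two lifts (since $K$ is null-homologous rel the cover — or rather, since the branching is along $K$ itself, $K$ lifts to a single curve, its preimage, which I'll keep track of). The surgery curve $C$ links $K$ with linking number $2n$, which is even, so $C$ lifts to \emph{two} parallel copies $\tilde C_1, \tilde C_2$ in the double cover of $S^3$ branched over the unknot, i.e.\ in $S^3$; and these two copies, together with the way the $2n$ twists distribute, form the $T_{2,2n}$ torus link. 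The framing $0$ on $C$ downstairs pulls back to framing $-n$ on each $\tilde C_i$ upstairs, because the self-linking picks up a correction term from the linking of the two lifts, which is $n$ (half of $2n$), giving $0 - n = -n$. This is the step I expect to be the main obstacle: carefully tracking the framing under the branched cover — getting the arithmetic relating the downstairs framing $0$, the linking number $2n$ with the branch set, and the resulting upstairs framing $-n$ on each component of $T_{2,2n}$. I would verify it by a linking-form computation in the cover, or by a small model computation, or by appealing to the general formula for lifts of framed curves in branched double covers (e.g.\ as in Akbulut--Kirby or Rolfsen).

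Having established that $\Sigma(K)$ is $(-n)$--surgery on the two components of $T_{2,2n} \subset S^3$, I would then read off $b_1$. The linking matrix of this surgery is the $2\times 2$ matrix with $-n$ on the diagonal and $n$ (the linking number of the two components of $T_{2,2n}$) off the diagonal:
\[
\begin{pmatrix} -n & n \\ n & -n \end{pmatrix}.
\]
Its determinant is $n^2 - n^2 = 0$, so it is singular of rank $1$ (it is not the zero matrix since $n \geq 1$), whence $b_1(\Sigma(K)) = 2 - 1 = 1$. (One can cross-check this against Lemma~\ref{l:BDCextension}: $H_1(\Sigma(K))$ should have free rank $1$, generated by the lift of $\phi$.) I would close by noting that this also pins down the torsion part, $H_1(\Sigma(K))_{\mathrm{tors}} \cong \Z/n\Z$ if needed later, though only the $b_1$ statement is claimed here.
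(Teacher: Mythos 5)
Your overall strategy — present $K\subset\es$ by $0$-surgery on a curve in $S^3$, then lift that surgery to the double cover of $S^3$ branched over the unknotted $K$ — is the paper's approach (the Montesinos trick). But there is a genuine gap and a setup error.

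The setup is internally inconsistent: you first say $K_0$ links $C$ once, then a few lines later use $\mathrm{lk}(C,K)=2n$. The value $2n$ is the correct one, since $K_{1,2n}$ winds $2n$ times around the $S^1$-factor of $\es$, which in the surgery picture is the meridian direction of $C$. The clean presentation — and the paper's starting point — realizes $K$ as the closure of the $2n$-braid $\beta=\sigma_1\cdots\sigma_{2n-1}$ (an unknot), with the surgery curve $A$ being the braid axis; then $\mathrm{lk}(A,K)=2n$.

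The genuine gap: you assert, but do not prove, that the two lifts $\tilde C_1,\tilde C_2$ form $T_{2,2n}$, and in particular that $\mathrm{lk}(\tilde C_1,\tilde C_2)=n$. Your phrase "half of $2n$" suggests you believe $\mathrm{lk}(\tilde C_1,\tilde C_2)=\tfrac12\,\mathrm{lk}(C,K)$ in general, but no such formula holds — the mutual linking of the lifts depends on more than the linking with the branch set. The missing input, which the paper supplies, is the symmetry of the two-component link $A\cup K$ in $S^3$: there is an isotopy interchanging $A$ and $K$ (each is an unknot and is the braid axis of the other). This lets you trade the double cover branched over $K$ together with the lift of $A$ for the double cover branched over the braid axis $A$ together with the lift of the closed braid $\hat\beta=K$. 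The latter cover is $S^3$, and the preimage of $\hat\beta$ under the double branched cover over its axis is $\widehat{\beta^2}=\widehat{(\sigma_1\cdots\sigma_{2n-1})^2}$, which is precisely $T_{2,2n}$. Only once this identification is in hand do you know $\mathrm{lk}(\tilde C_1,\tilde C_2)=n$, after which your framing formula $0-n=-n$ and the final computation ($\det$ of the linking matrix is $0$, rank $1$, so $b_1=1$) go through correctly and match the paper's. So: fix the linking-number slip in the setup, and fill in the identification of the lifted link — the symmetry argument is the cleanest route.
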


\begin{proof}
We refer to Figure~\ref{f:Sigma245}.
The knot $K$ is presented as the closure of the $2n$--braid $\sigma_1\cdots\sigma_{2n-1}$, after doing $0$--surgery on the axis $A$.
Since the link comprising $A$ and $K$ (viewed as a link in $S^3$) is symmetric, $\Sigma(K)$ is presented as surgery on the closure of the braid $(\sigma_1\cdots\sigma_{2n-1})^2$, i.e.~the $T_{2,2n}$ torus link.
Determining the surgery coefficient is an easy calculation (see~\cite[Section 10.C]{Rolfsen}).

Now, this gives a presentation of $H_1(\Sigma(K))$ by the matrix
\[
\left(
\begin{array}{cc}
-n & n\\
n & -n
\end{array}
\right),
\]
hence $H_1(\Sigma(K)) = \Z \oplus \Z/n\Z$.
\end{proof}

Let $C$ be the 4--manifold with boundary $\Sigma(K)$ given by the surgery presentation of Lemma~\ref{l:SigmaK} above (also called the \emph{trace} of that surgery).

\begin{lemma}\label{l:Cthecap}
The $4$--manifold $C$ has homology groups $H_0(C)\cong \Z$, $H_2(C) \cong \Z^{\oplus 2}$, and all its other homology groups vanish.
The inclusion of $\Sigma(K)$ into $C$ induces an injection $H_2(\Sigma(K)) \to H_2(C)$. Moreover, $C$ contains a surface $S$ of self-intersection $-n$; that is $[S]\cdot[S] = -n$, where $\cdot$ denotes the intersection form on $H_2(C)$.
\end{lemma}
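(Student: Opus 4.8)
The plan is to read everything off the handle decomposition of $C$. Recall that, by Lemma~\ref{l:SigmaK}, $C$ is $D^4$ with two $2$--handles $h_1,h_2$ attached along the two components $L_1,L_2$ of the $T_{2,2n}$ torus link, each with framing $-n$. Since $C$ is built from one $0$--handle and two $2$--handles, it is homotopy equivalent to $S^2\vee S^2$, and I would use this to conclude at once that $H_0(C)\cong\Z$, $H_2(C)\cong\Z^{\oplus2}$, and $H_i(C)=0$ for $i=1,3,4$ (the vanishing of $H_4$ because $C$ is a compact $4$--manifold with non-empty boundary). A natural basis of $H_2(C)$ is given by $[S_1],[S_2]$, where $S_i$ is the closed surface obtained by capping the core of $h_i$ with a Seifert surface for $L_i$ pushed into the interior of $D^4$; with respect to this basis the intersection form of $C$ is precisely the matrix appearing in Lemma~\ref{l:SigmaK}, its diagonal entries being the framings $-n$ and its off-diagonal entry the linking number of the two components of $T_{2,2n}$, which is $n$.

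For the surface of self-intersection $-n$ I would take $S:=S_1$. Each component of $T_{2,2n}$ is a $(1,n)$--curve on the Heegaard torus, hence an unknot in $S^3$, so $L_1$ bounds an embedded disc $D\subset S^3$; pushing the interior of $D$ into $\operatorname{int}D^4$ and capping off with the core of $h_1$ yields an embedded sphere $S\subset C$ whose self-intersection equals the attaching framing, so $[S]\cdot[S]=-n$.

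Finally, for the injectivity of the map $H_2(\Sigma(K))\to H_2(C)$ induced by the inclusion, I would invoke the long exact sequence of the pair $(C,\partial C)=(C,\Sigma(K))$, which contains
\[
H_3(C,\partial C)\longrightarrow H_2(\Sigma(K))\longrightarrow H_2(C).
\]
By Lefschetz duality $H_3(C,\partial C)\cong H^1(C)\cong\mathrm{Hom}(H_1(C),\Z)=0$, since $H_1(C)=0$; hence the second map is injective, as claimed. (One can moreover identify its image with the kernel of the map $H_2(C)\to H_2(C,\partial C)$ given by the intersection form, i.e.\ with the line spanned by $[S_1]+[S_2]$, but this is not needed for the lemma.)

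I do not expect any real obstacle here: the argument is entirely standard Kirby calculus and duality, and the only points requiring care are the framing/sign conventions needed to get $[S]\cdot[S]=-n$ and the identification of the intersection form, both of which are already implicit in the computation of $H_1(\Sigma(K))$ in Lemma~\ref{l:SigmaK}.
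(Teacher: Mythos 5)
Your proposal is correct and follows essentially the same route as the paper: read the homology groups off the handle decomposition (one $0$--handle, two $2$--handles, nothing else), get injectivity of $H_2(\Sigma(K))\to H_2(C)$ from the long exact sequence of the pair plus $H_3(C,\partial C)\cong H^1(C)=0$, and produce the surface of square $-n$ by capping a Seifert surface of an attaching circle with the core of the corresponding $2$--handle. The only cosmetic difference is that you observe each component of $T_{2,2n}$ is unknotted and so use a disc, yielding a sphere, whereas the paper just says "a Seifert surface"; the self-intersection is the framing either way.
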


\begin{proof}
This is immediate from the fact that $C$ has a handle decomposition with no 1--, 3--, or 4--handles, and with two 2--handles.
The surface of self-intersection $-n$ is obtained by capping off a Seifert surface of either attaching circle (see Figure~\ref{f:Sigma245}(C)) with the core of the corresponding 2--handle.
\end{proof}

\begin{lemma}\label{l:H3Etilde}
The group $H_3(\tilde E_M;\F)$ is trivial.
\end{lemma}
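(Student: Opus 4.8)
The plan is to reduce the statement to a homology computation for $E_M$ itself, and then transfer that computation to the cover using the long exact sequence relating the $\F$-homology of a double cover to that of its base, which is especially clean with $\F$-coefficients. Concretely, I will first prove that $H_i(E_M;\F)=0$ for all $i\ge 3$, and then deduce $H_3(\tilde E_M;\F)=0$.

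First I would compute the homology of $E_M$ from the Mayer--Vietoris sequence of the decomposition $\ed = N(M)\cup E_M$, where $N(M)$ is the tubular neighbourhood of $M$ from the proof of Lemma~\ref{l:BDCextension} and (a collar on) the intersection $N(M)\cap E_M$ is the vertical boundary $V$. The inputs are elementary: $\ed\simeq S^1$ and $N(M)\simeq M\simeq S^1$, so both have trivial $\F$-homology in degrees $\ge 2$; and $V$ retracts onto a Klein bottle (as recorded in the proof of Lemma~\ref{l:BDCextension}), so $H_i(V;\F)=0$ for $i\ge 3$. Feeding these into Mayer--Vietoris in degrees $\ge 3$ then forces $H_i(E_M;\F)=0$ for all $i\ge 3$; in particular $H_3(E_M;\F)=0$. (Equivalently, one can first compute $H_i(E_M;\Z)=0$ for $i=2,3,4$ and pass to $\F$-coefficients via universal coefficients.)

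Next I would use that $\tilde E_M\to E_M$ is a connected double cover, classified by the surjection $\pi_1(E_M)\to\Z/2\Z$ defining $\tilde E_M$. Since the cellular chain complex of the universal cover is free over $\Z[\pi_1(E_M)]$, the short exact sequence of (trivial) coefficient modules $0\to\F\to\F[\Z/2\Z]\to\F\to 0$ induces a short exact sequence of chain complexes $0\to C_*(E_M;\F)\to C_*(\tilde E_M;\F)\to C_*(E_M;\F)\to 0$, hence a long exact sequence
\[
\cdots\to H_n(E_M;\F)\to H_n(\tilde E_M;\F)\to H_n(E_M;\F)\to H_{n-1}(E_M;\F)\to\cdots.
\]
Specializing to $n=3$ and invoking $H_3(E_M;\F)=0$ squeezes $H_3(\tilde E_M;\F)$ between two zero groups, so $H_3(\tilde E_M;\F)=0$.

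The only delicate point, and thus the (mild) main obstacle, is the Mayer--Vietoris bookkeeping: one must correctly identify the intersection piece with the vertical boundary $V$ and invoke its Klein-bottle retraction, and one must stay with $\F$- (or untwisted $\Z$-) coefficients so that no orientability issue arising from the non-orientable pieces $M$ and $V$ enters. The duality-based alternative $H_3(\tilde E_M;\F)\cong H^1(\tilde E_M,\partial\tilde E_M;\F)$ would instead require analysing $\partial\tilde E_M$, which is why I would avoid it.
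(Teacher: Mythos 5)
Your proof is correct and follows essentially the same route as the paper: show $H_3(E_M;\F)=0$ via Mayer--Vietoris for $\ed = N \cup E_M$ using that $V$ retracts to a Klein bottle, then squeeze $H_3(\tilde E_M;\F)$ in the exact sequence for the double cover. The paper simply cites this sequence as the Gysin sequence for the double cover, whereas you derive it from the coefficient sequence $0\to\F\to\F[\Z/2\Z]\to\F\to 0$; this is the same sequence.
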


\begin{proof}
We look at the Gysin sequence associated to the double cover $\tilde E_M \to E_M$:
\[
H_3(E_M;\F) \longrightarrow H_3(\tilde E_M;\F) \longrightarrow H_3(E_M;\F);
\]
thus, it suffices to know that $H_3(E_M;\F) = 0$.
In fact, from the Mayer--Vietoris long exact sequence of $\ed = N \cup E_M$, we extract
\[
0 = H_3(V;\F) \longrightarrow H_3(N;\F) \oplus H_3(E_M;\F) \longrightarrow H_3(\ed;\F) = 0,
\]
which implies the claim.
\end{proof}

\begin{figure}
\centering
    \begin{subfigure}[t]{0.25\textwidth}
    \centering
    \labellist
    \pinlabel $\displaystyle \pm \frac1{2n}$ at 24 60
    \pinlabel $\vdots$ at 50 62
    \pinlabel $\phantom{a}_{f}$ at 56 0 
    \endlabellist
        \includegraphics[width=0.55\textwidth]{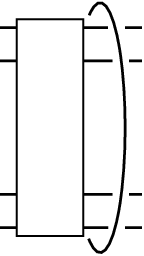}
        \caption{A surgery presentation of $T_{2n,2n\pm 1} \subset S^3$ (if $f=1)$ and of $K_{1,2n} \subset \es$ (if $f=0$).}
    \end{subfigure}
    \hspace{0.03\textwidth}
    \begin{subfigure}[t]{0.25\textwidth}
    \centering
    \labellist
    \pinlabel $\displaystyle \pm \frac1{2n}$ at 24 60
    \pinlabel $\vdots$ at 50 62
    \pinlabel $\phantom{a}_{f}$ at 0 6
    \endlabellist
        \includegraphics[width=0.55\textwidth]{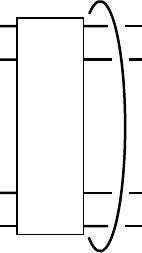}
        \caption{The same surgery presentations of $T_{2n,2n\pm1}$ and $K_{1,2n}$, after an isotopy.}
    \end{subfigure}
    \hspace{0.03\textwidth}
    \begin{subfigure}[t]{0.25\textwidth}
    \centering
    \labellist
    \pinlabel $\displaystyle \pm \frac2{2n}$ at 24 60
    \pinlabel $\vdots$ at 50 62
    \pinlabel $\phantom{a}_{f \mp n}$ at 70 5
    \pinlabel $\phantom{a}_{f \mp n}$ at 70 21 
    \endlabellist
        \includegraphics[width=0.55\textwidth]{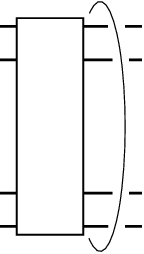}
        \caption{A surgery presentation of $\Sigma(T_{2n,2n\pm 1})$ (if $f=-1$) and $\Sigma(T_{1,2n})$ (if $f=0$). (The thin curve is auxiliary.)}
    \end{subfigure}

\caption{The $4$--manifold $C$ from Lemma~\ref{l:Cthecap} ($f=0$) and the $4$--manifold $W$ from Lemma~\ref{l:surgerypresSigmatorus} ($f=1$) as traces of surgery presentations. The picture represents closures of $2n$--braids, and each box represents a fraction of a full twist.}\label{f:Sigma245}
\end{figure}

\begin{lemma}\label{l:HSigmaM}
We claim the following facts about the homology of $\Sigma(M)$.
\begin{enumerate}
\item\label{l:H3SigmaM}The third homology group $H_3(\Sigma(M))$ is torsion, and has odd order.
It follows that also $H^3(\Sigma(M))$ and $H_1(\Sigma(M),\Sigma(K))$ are torsion.

\item\label{l:H2SigmaM} The second homology group $H_2(\Sigma(M))$ is torsion, and $b_1(\Sigma(M)) = 1$.
\end{enumerate}
\end{lemma}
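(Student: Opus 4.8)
The plan is to compare the homology of the branched cover $\Sigma(M)$ with that of the \emph{un}branched cover $\tilde E_M$, about which Lemma~\ref{l:H3Etilde} already gives information, and then to extract the low-degree statements from Euler characteristics and Poincar\'e--Lefschetz duality. Write $N$ for a tubular neighbourhood of $M$ in $\ed$, so that $\ed = N\cup_V E_M$, and let $\tilde N,\tilde V\subset\Sigma(M)$ be the preimages of $N$ and $V$; then $\Sigma(M) = \tilde N\cup_{\tilde V}\tilde E_M$. I would first record three facts. \textbf{(i)} The cover $\tilde N\to N$ is the fibrewise double cover of the disc bundle $N$ branched over its zero section, so $\tilde N$ is again a disc bundle over $M$ and deformation retracts onto $S^1$; in particular $H_{\ge 2}(\tilde N;\Z) = 0$. \textbf{(ii)} The cover $\tilde V\to V$ is classified by the restriction of the defining homomorphism $H_1(E_M)\to\F$, which sends the fibre class $[\mu]$ of the circle bundle $V\to M$ to $1$. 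Since $V$ retracts onto a Klein bottle on which $[\mu]$ generates the torsion of $H_1$, and since the only double cover of a Klein bottle killing that torsion class is the (orientable) torus, $\tilde V$ is not that cover; as every double cover of a Klein bottle is a torus or a Klein bottle, $\tilde V$ retracts onto a Klein bottle, and so $H_2(\tilde V;\Z) = H_3(\tilde V;\Z) = 0$. \textbf{(iii)} $\Sigma(M)$ is oriented: with the branch locus removed it double-covers the orientable manifold $\ed\setminus M$ and is thus orientable, and since the branch locus has codimension two the restriction $H^1(\Sigma(M);\F)\to H^1(\Sigma(M)\setminus(\text{branch locus});\F)$ is injective, forcing $w_1(\Sigma(M)) = 0$.

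With this in hand the first assertion is immediate. The relevant portion of the Mayer--Vietoris sequence of $\Sigma(M) = \tilde N\cup_{\tilde V}\tilde E_M$ is
\[
H_3(\tilde V;\Z)\longrightarrow H_3(\tilde N;\Z)\oplus H_3(\tilde E_M;\Z)\longrightarrow H_3(\Sigma(M);\Z)\longrightarrow H_2(\tilde V;\Z),
\]
and by (i) and (ii) the three outer groups vanish, so $H_3(\Sigma(M);\Z)\cong H_3(\tilde E_M;\Z)$. By Lemma~\ref{l:H3Etilde} and the universal coefficient theorem this group is finitely generated with $H_3(\tilde E_M;\Z)\otimes\F = 0$, hence finite of odd order. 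The two consequences follow formally: $H^3(\Sigma(M);\Z)\cong\mathrm{Ext}(H_2(\Sigma(M);\Z),\Z)$ by the universal coefficient theorem (using that $H_3(\Sigma(M);\Z)$ is finite) and is therefore torsion, and $H_1(\Sigma(M),\Sigma(K);\Z)\cong H^3(\Sigma(M);\Z)$ by Lefschetz duality, using (iii).

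For the second assertion I would first pin down $b_1(\Sigma(M))$. A lift $\tilde\phi\subset\Sigma(K)\subset\Sigma(M)$ of the curve $\phi = \{\star\}\times S^1\subset\es$ exists, since $\phi$ misses the branch locus and the defining homomorphism vanishes on $[\phi]$; it maps to a generator of $H_1(S^1)$ under the composite $\Sigma(M)\to\ed\to S^1$, so $[\tilde\phi]$ has infinite order and $b_1(\Sigma(M))\ge 1$. Conversely, the long exact sequence of the pair $(\Sigma(M),\Sigma(K))$ together with the torsion-ness of $H_1(\Sigma(M),\Sigma(K))$ just established gives $\mathrm{rank}\,H_1(\Sigma(M)) = \mathrm{rank}\,\big(\mathrm{im}\,(H_1(\Sigma(K))\to H_1(\Sigma(M)))\big)\le b_1(\Sigma(K)) = 1$ by Lemma~\ref{l:SigmaK}, so $b_1(\Sigma(M)) = 1$. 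Finally, the Euler characteristic of the branched double cover is $\chi(\Sigma(M)) = 2\chi(\ed) - \chi(M) = 0$, and since $\Sigma(M)$ is connected with non-empty boundary and $H_3(\Sigma(M);\Z)$ is finite, this reads $0 = 1 - b_1(\Sigma(M)) + b_2(\Sigma(M)) = b_2(\Sigma(M))$; hence $H_2(\Sigma(M);\Z)$ is torsion, as claimed.

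The one step that is not bookkeeping is (ii): correctly identifying the homotopy type of the cover $\tilde V$ of the vertical boundary. The point is small but essential---precisely because the defining homomorphism is non-trivial on the torsion class $[\mu]$, the cover $\tilde V$ is a (twisted) Klein bottle rather than the orientation double cover $T^2$---and it is exactly the vanishing of $H_2(\tilde V;\Z)$ and $H_3(\tilde V;\Z)$ that collapses the Mayer--Vietoris sequence to $H_3(\Sigma(M);\Z)\cong H_3(\tilde E_M;\Z)$. Everything else reduces to the universal coefficient theorem, Lefschetz duality, and the Euler characteristic count above.
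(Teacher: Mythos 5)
Your proof is correct and takes essentially the same route as the paper: the Mayer--Vietoris sequence for $\Sigma(M) = \tilde N \cup_{\tilde V}\tilde E_M$, followed by the Euler characteristic count $\chi(\Sigma(M)) = 0$ and Lefschetz duality. The only (welcome) differences are cosmetic: you spell out why $\tilde V$ is a Klein bottle rather than a torus, you note $H_3(\tilde V)=0$ to get an isomorphism $H_3(\Sigma(M))\cong H_3(\tilde E_M)$ where the paper only needs the surjection, and you obtain $b_1(\Sigma(M))\ge 1$ by exhibiting a lift of $\phi$ of infinite order, whereas the paper deduces it from the non-negativity $b_2 = b_1 - 1 \ge 0$ in the same Euler-characteristic identity.
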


\begin{proof}[Proof of Lemma~\ref{l:HSigmaM}]
To prove point~\eqref{l:H3SigmaM}, we consider the Mayer--Vietoris long exact sequence associated to $\Sigma(M)  = \tilde N \cup_{\tilde V} \tilde{E}_M$, where $\tilde N$ is a neighbourhood of the branching set $\tilde M$ of $\Sigma(M) \to \ed$.
Like above, the neighborhood $\tilde N$ retracts onto $\tilde M$, and $\tilde V$ retracts onto a Klein bottle.
We then have
\[
H_3(\tilde E_M) \oplus H_3(\tilde N) \longrightarrow H_3(\Sigma(M)) \longrightarrow H_2(\tilde V) = 0;
\]
the claim follows since $H_3(\tilde E_M)$ is torsion of odd order ($H_3(\tilde E_M;\F) = 0$) and $H_3(N) = 0$.
The second part of the claim follows from the universal coefficient theorem and Poincar\'e--Lefschetz duality.

We now claim he map $H_1(\Sigma(K);\Q) \to H_1(\Sigma(M);\Q)$ induced by the inclusion is onto: this follows immediately from the long exact sequence for the pair $(\Sigma(M),\Sigma(K))$ and point~\eqref{l:H3SigmaM}:
\[
H_1(\Sigma(K);\Q) \longrightarrow H_1(\Sigma(M);\Q) \longrightarrow H_1(\Sigma(M),\Sigma(K);\Q) = 0.
\]
As a consequence, $b_1(\Sigma(M)) \le 1$.

Finally, to prove~\eqref{l:H2SigmaM}, observe that $b_k(\Sigma(M)) = 0$ for each $k\ge 3$ (the case $k=3$ is point~\eqref{l:H3SigmaM} above), $b_1(\Sigma(M)) \le 1$, and $b_0(\Sigma(M)) = 1$ ($\Sigma(M)$ is connected).
Since $\Sigma(M)$ is the double cover of $\ed$ branched over $M$, and that all three of $\chi(\ed)$, $\chi(M)$, and $\chi(V)$ vanish.
In particular, $\chi(\Sigma(M)) = 0$, too, and therefore $b_2(\Sigma(M)) = b_1(\Sigma(M)) - 1$;
however
\[
0 \le b_2(\Sigma(M)) = b_1(\Sigma(M)) - 1 \le 1-1 = 0;
\]
Therefore $b_1(\Sigma(M)) = 1$ and $b_2(\Sigma(M)) = 0$.
\end{proof}

Let us call $X$ the 4--manifold obtained by gluing $\Sigma(M)$ and $-C$ (the manifold given by $C$ with reversed orientation) along $\Sigma(K)$.

\begin{lemma}
The second homology group $H_2(X;\Q)$ is $1$--dimensional.
\end{lemma}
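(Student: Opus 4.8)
The plan is to compute $H_2(X;\Q)$ via the Mayer--Vietoris sequence for the decomposition $X = \Sigma(M) \cup_{\Sigma(K)} (-C)$, using the homological information already established for the three pieces. Over $\Q$ we have $b_1(\Sigma(M)) = 1$ and $b_2(\Sigma(M)) = 0$ by Lemma~\ref{l:HSigmaM}, while $b_1(\Sigma(K)) = 1$ by Lemma~\ref{l:SigmaK}, and for $C$ we have $b_2(C) = 2$, $b_1(C) = 0$ by Lemma~\ref{l:Cthecap}. Also $b_0$ of each piece is $1$ and $b_3(\Sigma(M);\Q) = 0$ by Lemma~\ref{l:HSigmaM}\eqref{l:H3SigmaM}, while $b_3(C;\Q) = 0$ and $b_3(\Sigma(K);\Q) = b_1(\Sigma(K);\Q) = 1$ by Poincar\'e duality. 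So the relevant stretch of the sequence is
\[
H_3(\Sigma(K);\Q) \to H_3(\Sigma(M);\Q)\oplus H_3(-C;\Q) \to H_3(X;\Q) \to H_2(\Sigma(K);\Q) \to H_2(\Sigma(M);\Q)\oplus H_2(-C;\Q) \to H_2(X;\Q) \to H_1(\Sigma(K);\Q) \to H_1(\Sigma(M);\Q)\oplus H_1(-C;\Q).
\]
Plugging in dimensions, the term $H_2(\Sigma(K);\Q)$ vanishes (its $b_2$ equals $b_1 = 1$ by duality; wait --- need care here, see below), so $H_2(X;\Q)$ injects into $H_1(\Sigma(K);\Q) \cong \Q$, giving $b_2(X) \le 1$.

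For the reverse inequality I would argue that this injection is actually onto, i.e. that the map $H_1(\Sigma(K);\Q) \to H_1(\Sigma(M);\Q)\oplus H_1(-C;\Q)$ is the zero map, or at least not injective. The key input is that $H_1(\Sigma(K);\Q) \to H_1(\Sigma(M);\Q)$ is onto (this was shown inside the proof of Lemma~\ref{l:HSigmaM}), and since both are $1$--dimensional, it is an isomorphism; meanwhile $H_1(-C;\Q) = 0$. Thus $H_1(\Sigma(K);\Q) \to H_1(\Sigma(M);\Q)\oplus H_1(-C;\Q)$ is injective after all --- so this route shows $b_2(X) \le 1$ but the other direction needs $H_2(\Sigma(K);\Q) \to H_2(\Sigma(M);\Q) \oplus H_2(-C;\Q)$ to have a cokernel. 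Here is where Lemma~\ref{l:Cthecap} does the real work: it asserts $H_2(\Sigma(K)) \to H_2(C)$ is injective, and $H_2(\Sigma(K);\Q)$ is $1$--dimensional (by duality with $H_1$, correcting the earlier slip). Since $H_2(\Sigma(M);\Q) = 0$, the map $H_2(\Sigma(K);\Q) \to H_2(\Sigma(M);\Q) \oplus H_2(-C;\Q)$ is injective with $2$--dimensional target, so it has $1$--dimensional cokernel, which then maps isomorphically into $H_2(X;\Q)$. Combined with the injection $H_2(X;\Q) \hookrightarrow H_1(\Sigma(K);\Q) \cong \Q$ coming from exactness on the other side, we must have $b_2(X) = 1$.

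Concretely I would write the argument in two halves. First, exactness at $H_2(X;\Q)$: the image of $H_2(\Sigma(M);\Q)\oplus H_2(-C;\Q) \to H_2(X;\Q)$ has dimension $2 - 1 = 1$ (rank of the map minus... more precisely it is $\dim(H_2(\Sigma(M);\Q)\oplus H_2(-C;\Q)) - \dim(\text{image of } H_2(\Sigma(K);\Q))= 2 - 1 = 1$), and $H_2(X;\Q)$ surjects onto the kernel of $H_1(\Sigma(K);\Q) \to H_1(\Sigma(M);\Q)\oplus H_1(-C;\Q)$. Since that last map is injective (as $H_1(\Sigma(K);\Q)\cong H_1(\Sigma(M);\Q)$), the kernel is zero, so $H_2(X;\Q)$ equals the image of $H_2(\Sigma(M);\Q)\oplus H_2(-C;\Q)$, which is $1$--dimensional. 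That gives the result outright, and in fact makes the $H_3$ terms irrelevant.

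The main obstacle I anticipate is bookkeeping: making sure the rational Betti numbers of $\Sigma(K)$, $\Sigma(M)$, and $C$ in degrees $0,1,2,3$ are all correctly pinned down (especially $b_2(\Sigma(K);\Q) = b_1(\Sigma(K);\Q) = 1$ via Poincar\'e duality on the closed $3$--manifold $\Sigma(K)$), and correctly tracking which maps in Mayer--Vietoris are injective or surjective --- the surjectivity of $H_1(\Sigma(K);\Q)\to H_1(\Sigma(M);\Q)$ and the injectivity of $H_2(\Sigma(K);\Q)\to H_2(C;\Q)$ are exactly the two facts that have been prepared in Lemmas~\ref{l:Cthecap} and~\ref{l:HSigmaM}, so the proof amounts to assembling them correctly rather than proving anything genuinely new.
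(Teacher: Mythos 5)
Your final argument is correct and matches the paper's proof essentially verbatim: both use the Mayer--Vietoris sequence for $X = \Sigma(M) \cup_{\Sigma(K)} (-C)$ over $\Q$, with the two key inputs being injectivity of $H_2(\Sigma(K);\Q) \to H_2(C;\Q)$ (Lemma~\ref{l:Cthecap}) and injectivity of $H_1(\Sigma(K);\Q) \to H_1(\Sigma(M);\Q)$ (from the surjectivity shown in Lemma~\ref{l:HSigmaM} together with $b_1(\Sigma(K)) = b_1(\Sigma(M)) = 1$), giving the short exact sequence $0 \to \Q \to \Q^2 \to H_2(X;\Q) \to 0$. One small caution: the intermediate sentence claiming ``the injection $H_2(X;\Q) \hookrightarrow H_1(\Sigma(K);\Q)$'' is a leftover from your earlier slip --- that map is in fact the zero map, as your ``Concretely'' paragraph correctly establishes --- so delete it when writing this up.
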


\begin{proof}
We look at the Mayer--Vietoris long exact sequence; keeping in mind that $H_*(C) = H_*(-C)$, that $H_1(C) = 0$, and that $H_2(\Sigma(M);\Q) = 0$ (Lemma~\ref{l:H2SigmaM}), we obtain:
\[
H_2(\Sigma(K);\Q) \longrightarrow H_2(C;\Q) \longrightarrow H_2(X;\Q) \longrightarrow H_1(\Sigma(K);\Q) \longrightarrow H_1(\Sigma(M);\Q).
\]
The first map and the last map are injections by Lemma~\ref{l:Cthecap} and Lemma~\ref{l:H2SigmaM}, respectively. We therefore have a short exact sequence
\[
H_2(\Sigma(K);\Q) \longrightarrow H_2(C;\Q) \longrightarrow H_2(X;\Q) 
,
\]
where the second vector space has dimension 2 by Lemma~\ref{l:Cthecap}, hence $b_2(X) = 1$.
\end{proof}

\begin{proof}[Proof of Theorem~\ref{thm:nomob}]
Since $-C \subset X$, $X$ contains a surface of self-intersection $+n$, by Lemma~\ref{l:Cthecap}.
However, $b_2(X) = 1$, so the intersection form is unimodular of rank $1$, and it contains a vector of positive square, so it is $\langle +1 \rangle$.
This contradicts the existence of a vector of square $+n$, since $n$ is not a square.
\end{proof}
We note that Proposition~\ref{prop:T2p2kppm1doesnotboundMB}, given below, provides another proof of Theorem~\ref{thm:nomob}, for $n$ a prime, while Proposition~\ref{p:T45} implies Theorem~\ref{thm:nomob} for $n\equiv 1, 2 \pmod 4$.

\newcommand{\gtop}{\gamma_4^{\rm top}}
\newcommand{\gsmooth}{\gamma_4}
\newcommand{\gstop}{\gamma_4^{\rm stop}}
\newcommand{\gssmooth}{\gamma_4^{\rm s}}

\section{Non-orientable $4$--genus for torus knots in $S^3$}\label{s:balls}

In this section we discuss non-orientable $4$--genus for torus knots in $S^3$. In Subsection~\ref{subsec:ProofThm1.3}, we show that for torus knots the notion of non-orientable $4$--genus depends on the choice of category by establishing the existence of locally-flat surfaces in $B^4$ with boundary certain torus knots. In contrast, in Subsection~\ref{subsec:mob}, we discuss obstructions (in both categories) for the existence of Möbius bands in $B^4$ with boundary a given knot.

\subsection{Proof of Theorem~\ref{thm:gammafortorusknots}}\label{subsec:ProofThm1.3}
The following proposition is a strengthening of Theorem~\ref{thm:gammafortorusknots}.

\begin{prop}\label{prop:gammatop<gammasmooth}
For integers $n\geq5$, we have $\gtop(T_{2n-1,2n}) \le n-2 < n-1 = \gsmooth(T_{2n-1,2n})$.
In fact, there exists a non-orientable connected locally-flat surface $\Sigma\subset B^4$ with $b_1(\Sigma)=n-2$ such that $\partial \Sigma=T_{2n-1,2n}$ and $\pi_1(B^4\setminus \Sigma)\cong \Z/2\Z$.
\end{prop}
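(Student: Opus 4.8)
The plan is to argue by contradiction, mirroring the logic of Section~\ref{s:nomob} and using the obstructions collected at the start of Section~\ref{s:balls}. Suppose $2n-1 = p$ is a prime with $p \equiv 5 \pmod 8$, and suppose that $T_{p,p+1} = T_{2n-1,2n}$ bounds a locally-flat non-orientable surface $\Sigma \subset B^4$ with $b_1(\Sigma) = 1$, i.e.\ a locally-flat M\"obius band (the case $b_1(\Sigma)=0$ is impossible since $T_{2n-1,2n}$ is not slice, having nonzero determinant). Since $\det T_{2n-1,2n} = 2n-1 = p$ is prime, hence square-free, Proposition~\ref{p:MY} applies: the linking form $\lambda_{\Sigma(K)}$ on $\Sigma(K) = \Sigma(2,2n-1,2n)$ represents $1/p$ or $-1/p$ as a square. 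Here $H_1(\Sigma(2,2n-1,2n)) \cong \Z/p\Z$, so $\lambda_{\Sigma(K)}$ is a linking form on $\Z/p\Z$, and it is isomorphic to $\langle c/p\rangle$ for some $c$; representing $\pm 1/p$ as a square forces $\lambda_{\Sigma(K)} \cong \langle 1/p\rangle$ or $\langle -1/p\rangle$.

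The next step is to compute $\lambda_{\Sigma(K)}$ independently and show it cannot be $\langle +1/p\rangle$, so it must be $\langle -1/p\rangle$; then I apply Lemma~\ref{l:presentingassquares}\eqref{linking2mod4} to derive a contradiction. For the independent computation, I would use Lemma~\ref{l:surgerypresSigmatorus}: $\Sigma(2,2n-1,2n) = \Sigma(2,2n,2n-1) = \partial W$, where $W$ has intersection form presented by $\big({\tiny\begin{array}{cc} n-1 & n \\ n & n-1\end{array}}\big)$ (taking the lower sign $q = 2n-1 = 2n + (-1)$, so $\mp n - 1 = n - 1$). The linking form of $\partial W$ on the torsion of $H_1(\partial W)$ is computed from the inverse of this matrix over $\Q$, restricted appropriately; the determinant of the matrix is $(n-1)^2 - n^2 = -(2n-1) = -p$, confirming $H_1 = \Z/p\Z$, and a short computation of the discriminant form gives $\lambda_{\Sigma(K)} \cong \langle s/p \rangle$ for an explicit $s$. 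The key arithmetic point is to identify whether this $s$ makes $\langle s/p\rangle$ isomorphic to $\langle +1/p\rangle$ or $\langle -1/p\rangle$ — equivalently, whether $s$ is a square mod $p$ or $-1$ times a square; combined with the constraint from Proposition~\ref{p:MY} that it must be one of these two, I expect to pin it down to $\langle -1/p\rangle$, at which point Lemma~\ref{l:presentingassquares}\eqref{linking2mod4} says $\lambda_{\Sigma(K)}$ represents \emph{neither} $2/p$ nor $-2/p$ as a square.

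To land the contradiction I then need a \emph{second} M\"obius band hypothesis forcing $\pm 2/p$ to be represented. This is where $T_{2n-1,2n}$ being a torus knot (rather than an arbitrary knot of determinant $p$) should be used: I would observe that a locally-flat M\"obius band bounding $T_{2n-1,2n}$, pushed into $B^4$, also gives rise — via the branched double cover of $B^4$ over it — to a $4$--manifold $Z$ with $\partial Z = \Sigma(K)$, $b_2(Z) = 1$, and a class of self-intersection $\pm n = \pm(2n-1+1)/2$; reducing the intersection form mod the boundary linking form shows $\lambda_{\Sigma(K)}$ represents $\pm n/p \equiv \pm 2^{-1}/p$ as a square, hence (rescaling by the square $2^{-2}$) represents $\pm 2/p$ as a square. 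That is the contradiction. The main obstacle I anticipate is the bookkeeping in the last step: getting the branched-double-cover $4$--manifold to have $b_2 = 1$ requires the homological input of Section~\ref{s:nomob} (Lemmas~\ref{l:HSigmaM} and the $b_2(X)=1$ lemma) adapted from $\ed$ to $B^4$, and correctly tracking that the self-intersection number is $n$ (not $2n$ or something else) and its relation to the linking form value — the factor-of-two normalizations between "self-intersection in a $4$--manifold with boundary" and "value of the linking form on the boundary" are exactly what make the $p \equiv 5 \pmod 8$ hypothesis (rather than, say, $p \equiv 1 \pmod 8$) the decisive one via Lemma~\ref{l:presentingassquares}.
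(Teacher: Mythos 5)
Your overall skeleton matches the paper's: assume $T_{2n-1,2n}$ bounds a locally-flat M\"obius band, apply Proposition~\ref{p:MY} to learn that the linking form $\lambda$ on $H_1(\Sigma(2,2n-1,2n)) \cong \Z/p\Z$ (with $p=2n-1$) represents $\pm\frac1p$ as a square, compute $\lambda$ independently, and run the two facts against Lemma~\ref{l:presentingassquares}.\eqref{linking2mod4}. However, the ``independent route'' has a gap, and your proposed patch for it is both unnecessary and incorrect. You correctly invoke Lemma~\ref{l:surgerypresSigmatorus} to produce a $4$--manifold $W$ with $\partial W = \Sigma(2,2n-1,2n)$, $H_1(W)=0$, and intersection form $\big({\tiny\begin{array}{cc}n-1 & n \\ n & n-1\end{array}}\big)$, but you stop short of extracting what $\lambda$ actually represents. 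Reading off a diagonal entry of (the negative of) the inverse of this matrix already shows that $\lambda$ represents $\frac{n-1}{p}$ as a square; since $2(n-1)\equiv -1\pmod p$ and $4$ is a square, this is the same as $\lambda$ representing $-\frac{2}{p}$. No M\"obius band hypothesis is needed for this half of the argument. Combined with the Murakami--Yasuhara constraint that $\lambda\cong\langle\pm\frac1p\rangle$ and the hypothesis $p\equiv 5\pmod 8$, Lemma~\ref{l:presentingassquares}.\eqref{linking2mod4} produces the contradiction directly.

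Your proposed ``second observation'' --- that the double cover $Z$ of $B^4$ branched over the hypothetical M\"obius band contains a class of self-intersection $\pm n$ --- is not justified and does not hold: a $4$--manifold $Z$ with $b_2(Z)=1$ and boundary a rational homology sphere with cyclic $H_1$ of prime order $p=2n-1$ must have intersection form $\langle\pm p\rangle = \langle\pm(2n-1)\rangle$, not a generator of square $\pm n$, precisely so that its inverse presents the linking form on $\Z/p\Z$. What this branched double cover actually yields is exactly the content of Proposition~\ref{p:MY} (that $\lambda$ represents $\pm\frac1p$), so it cannot be invoked a second time to supply the ``$\pm\frac2p$ is represented'' side of the contradiction. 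Once you replace this step with the direct reading of $\frac{n-1}{p}$ from the matrix in Lemma~\ref{l:surgerypresSigmatorus}, your outline coincides with the paper's proof.
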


\begin{remark}
Recently, Lobb observed in~\cite{Lobb19} that $\gsmooth$ is smaller on torus knots than previously conjectured by Batson in~\cite{Batson}.
However, one may ask whether the conjecture holds when restricting to surfaces with complements that have cyclic fundamental group.
(Note that, by the Mayer--Vietoris sequence, the first homology of the complement of such a surface is always $\Z/2\Z$.)
The second assertion of the above proposition, implies that even in this more restrictive setup, the topologically locally-flat quantity is strictly smaller than the corresponding smooth one.
Note that Proposition~\ref{p:T45} at least says that if $2n-1 \equiv 5 \pmod 8$, then we cannot hope to decrease the locally-flat non-orientable genus all the way down to~$1$.
\end{remark}

\begin{proof}[Proof of Proposition~\ref{prop:gammatop<gammasmooth}]
The equality $n-1 = \gsmooth(T_{2n-1,2n})$ is due to Batson.
For the upper bound on the topological cross cap number, we find a non-orientable spanning surface $S \subset S^3$ for $T_{2n-1,2n}$ with $b_1(S) = n$ with the following property: there is a separating simple closed curve $\gamma \subset S \subset S^3$ with trivial Alexander polynomial, such that one of the two connected components of $S\setminus\gamma$ is a once-punctured torus.

We modify $S$ by replacing the once-punctured torus in $S^3$ with the locally-flat disc in $B^4$ with boundary $\gamma$. By doing so, we find a locally-flat non-oriented surface $\Sigma \subset B^4$ with $b_1(\Sigma) =n-2$ that fills $T_{2n-1,2n}$.
The existence of such a disc is guaranteed by a consequence Freedman's celebrated disk theorem: knots with Alexander polynomial one are topologically slice.
In fact, a knot $K$ has Alexander polynomial $1$ if and only if, there exists a locally-flat disc $D\subset B^4$ with boundary $K$ such that $\pi_1(B^4\setminus D)\cong\Z$; see~\cite[Theorem~1.13]{Freedman_82_TheTopOfFour-dimensionalManifolds}. A Seifert--van Kampen calculation allows to check that the complement of $\Sigma$ (pushed into $B^4$ to be properly embedded) has cyclic fundamental group; this follows from the fact that $\pi_1(B^4\setminus D)\cong\Z$. In the rest of the proof we implement this in detail.

We first describe a (non-orientable) spanning surface $S$ for $T_{2nk-1,2n}$ for $k\geq0$. While we are interested in the case $k=1$, it is instructive to see that all of these are built from the case $k=0$. For this, we view $T_{2nk-1,2n}$ as the closure of the $2n$--stranded braid $(\sigma_1\sigma_2\cdots \sigma_{2n-1})^{2kn-1}$.
For $k=0$, we take $S$ to be the checkerboard surface for the standard diagram of the braid closure of $(\sigma_1\sigma_2\cdots \sigma_{2n-1})^{-1}$ that misses the braid axis. For $k\geq 1$, we take $S$ to be the surface obtained from the one for $k=0$ by $k$--surgery along the braid axis; i.e.~by adding $k$ positive full twists as depicted in Figure~\ref{fig:S}. Note that $b_1(S)=n$.
\begin{figure}[ht]
\centering
\subcaptionbox{\label{fig:Fzero} The spanning surface $S$ (gray).}
{\def\svgscale{0.6}
\centering
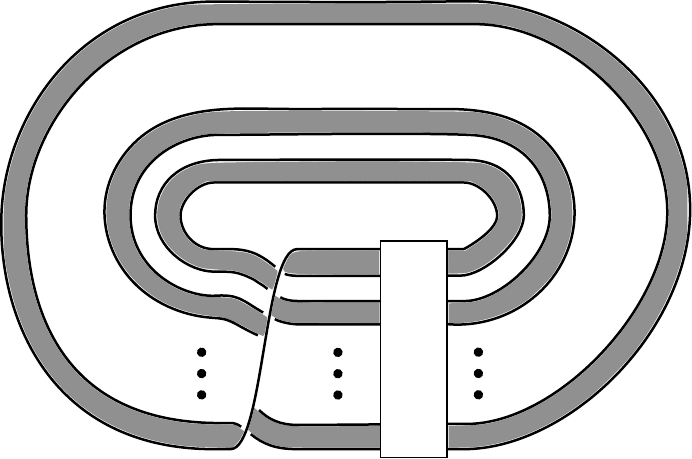}
\subcaptionbox{\label{fig:alphasinFzero} The curves $\alpha_i$: $\alpha_{i+1}$ is a copy of $\alpha_i$ shifted down by two strands. }
{\def\svgscale{1.3}
\centering
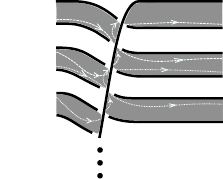}
\caption{\label{fig:S}The surface $S$ and the curves $\alpha_i$ on $S$. The curves $\alpha_i$ and $\alpha_{i+1}$ intersect in one point. The box represents $k$ positive full twists on $2p$ strands.}
\end{figure}

Next, we define an orientable, incompressible subsurface $F$ of $S$ with $b_1(F)=n-1$.
Let $\alpha_1$, $\alpha_2$, $\dots$, $\alpha_{n-1}$ be the simple closed curves depicted in Figure~\ref{fig:S}, and take $F$ to be a neighborhood of their union; in particular, the homology classes of the $\alpha_i$ constitute a basis for $H_1(F;\Z)$.
We focus on $F$ and try to find a separating curve in $F$ with Alexander polynomial one that cuts out a once-punctured torus. The boundary of a once-punctured torus $T$ is a knot with Alexander polynomial $1$ if and only if one (and thus all) matrix $A$ representing the Seifert form on $T$ satisfies
\begin{equation}\label{eq:Alex1}
\det(t^{1/2}A-t^{-1/2}A^T)=1.
\end{equation}
Therefore, finding such a once punctured torus amounts to finding a pair of once-intersecting curves $\beta_1$ and $\beta_2$ in $F$ such that, if $A$ is the matrix representing the restriction of the Seifert form on $F$ to ${\rm span}\{[\beta_1],[\beta_2]\}\subset H_1(F,\Z)$, then $A$ satisfies~\eqref{eq:Alex1}.
In fact, it suffices to find a rank-two subgroup $H$ of $H_1(F;\Z)$ on which the Seifert form is given by a bilinear form that satisfies~\eqref{eq:Alex1} since any pair of such homology classes can be represented by once intersecting simple closed curve; see~\cite[Proposition~9]{FellerLewark_16} or~\cite[last paragraph of the proof of Proposition~6]{FellerMcCoy_15} for details.
Consequently, for the rest of the proof, we only concern ourselves with the Seifert form on $F$ and subgroups $H$ as described above.

The Seifert form of $H_1(F;\Z)$ with respect to the basis $([\alpha_1],\dots,[\alpha_{n-1}])$ is
\[
M_k=M_0+M_{\rm twist}=
\left(\begin{array}{ccccc}-1&-1&0&0&\\
0&-1&-1&0&\\
0&0&-1&-1&\\
&&& & \ddots\end{array}\right)
+k\left(\begin{array}{cccc}
4&4&4&\\
4&4&4&\cdots\\
4&4&4&\\
 & \vdots && \ddots
\end{array}\right);
\]
in particular, for $n=5$
\begin{equation}\label{eq:Mforp=5}M_k=\left(\begin{array}{cccc} 4k-1 & 4k-1 & 4k & 4k\\
4k & 4k-1 & 4k-1&4k\\
4k & 4k & 4k-1 & 4k-1\\
4k & 4k & 4k & 4k-1\end{array}\right).\end{equation}
We note that $M_0+M_0^T$ is negative definite. In fact, for $k=0$, $F$ is the minimal Seifert surface of the $T_{2,-n+1}$ torus link.

We have now reduced the problem to the following linear algebra question:
for $n\geq5$ and $k=1$, do there exist vectors $a$ and $b$ in $\Z^{n-1}$ such that
$a^TM_ka=0$, $a^TM_kb=0$, and $b^TM_ka=\pm1$? And, once we have such an $H$ for $n=5$, we have it for all $n\geq 5$ since $M_k$ for $n=5$ is the top-left $4\times4$ sub-matrix of $M_k$ for $n\geq 5$.
Hence, we conclude the proof by noting that the following pair of vectors has the desired property: $\left(a=( -1,  -1,   0,   1)^T, b=(4,   1,   2,  -4)^T\right)$.
\end{proof}

\subsection{Torus knots that do not bound M\"obius bands}\label{subsec:mob}

Recall that Batson~\cite{Batson} proved that $\gsmooth(T_{2n-1,2n}) = n-1$ for each $n \ge 2$; however, neither his result nor subsequent related developments~\cite{OSSz, GM} say anything about $\gsmooth(T_{2n,2n+1})$.
For this reason, for most of this section we focus on finding values of $n$ for which we can prove that $\gtop(T_{2n,2n+1}) > 1$.

We begin with a statement about double covers of torus knots $T_{2n, 2n\pm1}$.
Recall that, if $p$ is even and $q$ is odd and coprime with $p$, then $|{\det T_{p,q}}|= |q|$.
Recall also that the double cover of $S^3$ branched over $T_{p,q}$ is the manifold $\Sigma(2,p,q)$, i.e. the link of the singularity of $\{x^2+y^p+z^q = 0\} \subset \C^3$ at the origin.

\begin{lemma}\label{l:surgerypresSigmatorus}
Let $n\geq 1$ be an integer. The $3$--manifold $\Sigma(2,2n,2n\pm1)$ is the boundary of a $4$--manifold $W$ with $H_1(W) = H_3(W) = 0$ and whose intersection form is represented by the matrix $\big({\tiny\begin{array}{cc}\mp n - 1& n\\n& \mp n - 1\end{array}}\big)$.
In particular, $H_1(\Sigma(2,2n,2n\pm1))$ is cyclic of order $2n\pm 1$.
\end{lemma}

\begin{proof} 
The $4$--manifold $W$ is given as the trace of the surgery in Figure~\ref{f:Sigma245}. We first unknot $T_{2n,2n\pm 1}$ with an axial surgery (left); the corresponding link is symmetric, so we swap its components (center); finally, we branch double cover over the axis (right).
The framings are easily computed (see, for example,~\cite[Section 10.C]{Rolfsen}) to be $\mp n - 1$ for each of the two $2$--handles, and their linking number is $\pm n$ (the signs are coherent with the sign determining the knot).
Up to changing the orientation of one of the two components, we can always change the signs off the diagonal, so the intersection form is presented by the matrix $V = \big({\tiny\begin{array}{cc}\mp n - 1& n\\n& \mp n - 1\end{array}}\big)$.

Adding the second row of $V$ $\pm1$-times to the first and then adding the first row $n$-times to the second yields $V\sim \big({\tiny\begin{array}{cc} - 1& \mp1\\ 0& \mp 2n - 1\end{array}}\big)$. Hence $H_1(\Sigma(2,2n,2n\pm1))\cong \Z/(2n\pm1)\Z$ as claimed.
\end{proof}

In what follows, given a rational number $q$, we denote with $\langle q \rangle$ the bilinear form $x\otimes y \mapsto xqy$; we use the same notation for bilinear forms that take values in $\Z$ (like the intersection form of a $4$--manifold with $b_2 = 1$) and for those that take values in $\Q/\Z$ (like the linking form $\lambda_Y$ of a rational homology $3$--sphere $Y$ with cyclic $H_1$).
We denote with $\big(\frac pq\big)$ the Jacobi symbol.
We say that a linking form $\lambda_Y$ \emph{represents $s$ as a square} if there exists a torsion element $x \in H_1(Y)$ such that $\lambda_Y(x,x) = s$.
For any integer $p>0$ dividing the order $|G|$ of a finite Abelian group $G$ with a bilinear form $\lambda$ to $\Q/\Z$, we take its reduction modulo $p$ to be 
the $\Q/\Z$--valued bilinear form $\lambda_p$ on $G\otimes \Z/p\Z$ given by
$(x\otimes \overline{1})\otimes(y\otimes \overline{1})\mapsto \lambda(x,y)\tfrac{|G|}{p}$. Note that, by definition, if $\lambda$ represents $\tfrac{k}{|G|}$ as a square, then $\lambda_p$ represents  $\tfrac{k}{p}$ as a square. 

The following statement is a special case of a result of Murakami and Yasuhara~\cite{MurakamiYasuhara}.

\begin{prop}[{\cite[Corollary~2.7]{MurakamiYasuhara}}]\label{p:MY}
Let $K\subset S^3$ be a knot that bounds a locally-flat M\"obius band in $B^4$.
Then there is a $\lambda_{\Sigma(K)}$--orthogonal decomposition $H_1(\Sigma(K)) = G \oplus H$ where $H$ has square order and $\lambda_{\Sigma(K)}|_G$ represents $\frac{1}{|G|}$ or $-\frac{1}{|G|}$ as a square.

In particular, if $\det K$ is square-free, then the linking form on $\Sigma(K)$ represents $\frac{1}{|{\det K}|}$ or $-\frac{1}{|{\det K}|}$ as a square.
\end{prop}

We will need the following elementary number theory calculation.

\begin{lemma}\label{l:presentingassquares}
Let $p$ be a prime and $\lambda$ be a linking form on $\Z/p\Z$ that represents $-\frac1p$ as a square.
\begin{enumerate}
\item\label{linking1mod4} If $p\equiv 3 \pmod 8$, then $\lambda$ represents $\frac2p$ as a square, but not $-\frac2p$.
\item\label{linking2mod4} If $p\equiv 5 \pmod 8$, then $\lambda$ represents neither $\frac2p $ nor $-\frac2p$ as squares.
\item\label{linking3mod4} If $p \equiv 7 \pmod 8$, then $\lambda$ represents $-\frac2p$ as a square, but not $\frac2p$.
\end{enumerate}
\end{lemma}

\begin{proof}
Since $\lambda$ represents $-\frac1p$, $\lambda$ is isomorphic to $\langle -\frac1p \rangle$; that is, $\lambda(x,x) = -\frac{x^2}p$ for every $x \in \Z/p\Z$, and $\frac{2}{p}$ (respectively, $-\frac{2}p$) is represented as a square if and only if $\jacobi{-2}{p} = 1$ (resp. $\jacobi{2}{p} = 1$).

It is well-known that $\jacobi{-1}{p} = 1$ if and only if $p\equiv 1 \pmod 4$, and that $\jacobi{2}{p} = 1$ if and only if $p\equiv \pm 1 \pmod 8$ (see, for instance,~\cite[Theorems~82 and~95]{HardyWright}). Using multiplicativity of Legendre symbols, we quickly derive all three statements.
\end{proof}

We can now prove Proposition~\ref{p:T45} from the introduction; that is, we show that $T_{p,p\pm 1}$ does not bound a M\"obius band if $p \equiv 5 \pmod 8$.

\begin{proof}[Proof of Proposition~\ref{p:T45}]
Call $\Sigma = \Sigma(2,p,p\pm1)$ and $\lambda = \lambda_\Sigma$ its linking form.
By Lemma~\ref{l:surgerypresSigmatorus}, $H_1(\Sigma)$ is cyclic of order $p$.
Suppose towards a contradiction that $T_{p,p\pm1}$ is the boundary of a locally flat M\"obius band.
By Proposition~\ref{p:MY}, there is a $\lambda$--orthogonal decomposition $H_1(\Sigma) = G\oplus H$, where $H$ has square order and $\lambda|_G$ represents $\frac1{|G|}$ or $-\frac1{|G|}$. Since $p\equiv 5 \pmod 8$ and all odd squares are congruent to $1$ modulo $8$, $G$ is not the trivial group. In fact,
$|G| \equiv 5 \pmod 8$.
This implies that either:
\begin{itemize}
\item[(i)] $|G|$ is divisible by a prime $q \equiv 5 \pmod 8$, or
\item[(ii)] $|G|$ is divisible by two primes $q_1 \equiv 3 \pmod 8$ and $q_2 \equiv 7 \pmod 8$.
\end{itemize}
We claim that, $\lambda|_G$ represent neither $\frac 2{|G|}$ nor $-\frac 2{|G|}$ as a square. We treat cases (i) and (ii) separately using reduction modulo $q$ and $q_1$ and $q_2$, respectively.

In case (i), we reduce $\lambda$ modulo $q$. Since $\lambda|_G$ represents $\frac1{|G|}$ or $-\frac1{|G|}$ as a square, its reduction modulo $q$ is a quadratic form on $\Z/q\Z$ that represents $\frac1q$ or $-\frac1q$ (and hence both, since $-1$ is a square mod $q$) as a square, therefore by Lemma~\ref{l:presentingassquares}\eqref{linking2mod4}, it represent neither $\frac2q$ nor $-\frac2q$ as a square. 
Hence, $\lambda|_G$ represent neither $\frac 2{|G|}$ nor $-\frac 2{|G|}$ as a square.

In case (ii), we reduce $\lambda$ modulo $q_1$ and modulo $q_2$. If $\lambda|_G$ represents $-\frac1{|G|}$ as a square, its reduction modulo $q_1$ cannot represent $-\frac2{q_1}$ as a square by Lemma~\ref{l:presentingassquares}\eqref{linking1mod4} and its reduction modulo $q_2$ cannot represent $\frac2{q_2}$ as a square by Lemma~\ref{l:presentingassquares}\eqref{linking3mod4}. Similarly, if $\lambda|_G$ represents $\frac1{|G|}$ as a square, its reduction modulo $q_2$ cannot represent $-\frac2{q_2}$ as a square and its reduction modulo $q_1$ cannot represent $\frac2{q_1}$ as a square. 
Hence, $\lambda|_G$ represents neither $\frac 2{|G|}$ nor $-\frac 2{|G|}$ as a square.

Next we derive  from Lemma~\ref{l:surgerypresSigmatorus} that $\lambda_\Sigma$ does represent $\frac2{|G|}$ or $-\frac2{|G|}$ as a square, which leads to the desired contradiction.

We begin with the case of $T_{p,p+1}$. Call $n = \frac{p+1}2$.
In this case, $\Sigma(2,2n-1,2n)$ bounds a $4$--manifold $W$ with $H_1(W)=0$ and intersection form presented by $\big({\tiny\begin{array}{cc}n-1 & n \\ n & n-1\end{array}}\big)$, and therefore the linking form of $\Sigma(2,2n-1,2n)$ is presented by $\frac1p \big({\tiny\begin{array}{cc}n-1 & n \\ n & n-1\end{array}}\big)$. In particular, it represents $\frac{n-1}{p}$ as a square.
Since $2(n-1) = 2n-2 \equiv -1 \pmod p$, the linking form represents $\frac{n-1}{p}$ if and only if it represents $-\frac{2}{p}$ as a square. By restricting to $G$, we see that $\lambda|_G$ represents $-\frac2{|G|}$ as a square, which is a contradiction.

Let us now look at the case of $T_{p,p-1}$. Call $n = \frac{p-1}2$.
$\Sigma(T_{2n,2n+1})$ now bounds a $4$--manifold with intersection form presented by $\big({\tiny\begin{array}{cc}-n-1 & n \\ n & -n-1\end{array}}\big)$, and therefore the linking form represents $\frac{n+1}{p}$ as a square. The inverse of $n+1$ modulo $p$ is $2$, so it also represents $\frac{2}{p}$ as a square. As above, restricting to $G$ we reach a contradiction.
\end{proof}

The next proposition implies Theorem~\ref{thm:nomob} when $n$ is a prime.

\begin{prop}\label{prop:T2p2kppm1doesnotboundMB}
For each odd prime $p$ there are infinitely many positive integers $k$ such that the knot $T_{2p,2kp \pm 1}$ does not bound a locally-flat M\"obius band in $B^4$.
\end{prop}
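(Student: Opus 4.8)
The plan is to mimic the strategy of Proposition~\ref{p:T45} and the last assertion of Proposition~\ref{prop:gammatop<gammasmooth}, but with $p$ fixed and $k$ varying, using the Jacobi symbol $\jacobi{\cdot}{2kp\pm1}$ to rule out the square condition supplied by Proposition~\ref{p:MY}. First I would compute $\det T_{2p,2kp\pm1}=2kp\pm1$; since this determinant need not be prime, I must first arrange that it is square-free, or more precisely that Proposition~\ref{p:MY} can still be applied. Here it is cleanest to further restrict to those $k$ for which $q\coloneqq 2kp\pm1$ is \emph{prime}: by Dirichlet's theorem on primes in arithmetic progressions, each of the two progressions $k\mapsto 2kp+1$ and $k\mapsto 2kp-1$ contains infinitely many primes (note $\gcd(2p,1)=1$), so we lose nothing by imposing this. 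With $q$ prime, $\Sigma(T_{2p,q})=\Sigma(2,2p,q)$ bounds, by Lemma~\ref{l:surgerypresSigmatorus}, a $4$--manifold $W$ with $H_1(W)=0$ and intersection form $\big({\tiny\begin{array}{cc}\mp p-1 & p\\ p& \mp p-1\end{array}}\big)$ (signs according to whether $q=2kp+1$ or $q=2kp-1$), whose determinant is $(\mp p-1)^2-p^2=\pm2p+1=\mp q$, so its negative-definite (or definite) inverse presents the linking form of $\Sigma(2,2p,q)$ on $\Z/q\Z$ as $\tfrac1q\big({\tiny\begin{array}{cc}\mp p-1 & p\\ p& \mp p-1\end{array}}\big)$.

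Next I would extract from this the list of values represented as squares by the linking form $\lambda$ of $\Sigma(2,2p,q)$. If $M\subset B^4$ is a locally-flat M\"obius band with $\partial M=T_{2p,q}$, then by Proposition~\ref{p:MY}, $\lambda$ represents $\tfrac1q$ or $-\tfrac1q$ as a square, i.e.\ (after diagonalizing, using that $q$ is prime so $\Z/q\Z$ has a single nontrivial form up to the quadratic residue character) there is $x\in\Z/q\Z$ with $\lambda(x,x)=\pm\tfrac1q$. On the other hand, plugging the diagonal basis vector $(1,0)$ (or, better, completing the presentation matrix to a diagonal form) into the explicit linking form above shows that $\lambda$ represents $\tfrac{\mp p-1}{q}$ as a square; and $\mp p-1$ is, modulo $q=\pm2p+1$, a fixed small multiple of a power of $2$ (one checks $2(\mp p-1)\equiv \mp(2p)-2\equiv\mp(q\mp1)-2\equiv -1\mp(-1)\cdot 0$, i.e.\ $2(\mp p-1)\equiv -1$ in one case and $\equiv 1$ in the other). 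So the set of values of $\lambda$ as squares is governed entirely by $\jacobi{u}{q}$ for a couple of explicit constants $u\in\{-1,2,-2\}$. Matching the ``allowed'' value $\pm\tfrac1q$ against the ``forced'' value $\tfrac{\mp p-1}{q}$ therefore reduces to a congruence condition on $q$ modulo $8$, exactly as in Lemma~\ref{l:presentingassquares}.

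The remaining step is then purely arithmetic: choose the residue of $q$ modulo $8$ so that the Jacobi (Legendre) symbol identities force a contradiction. Writing $q\equiv \mp1\pmod{2p}$ already, I additionally impose $q\equiv 5\pmod 8$ (in the $T_{2p,2kp-1}$ case, where $2(-p-1)\equiv\ldots$, so that the inverse of the forced square is $\mp2$ and the obstruction is $p\equiv 5\pmod 8$-type) or the appropriate residue in the $+1$ case; by the Chinese Remainder Theorem the conditions ``$q$ prime, $q\equiv\mp1\pmod{2p}$, $q\equiv 5\pmod 8$'' are simultaneously satisfiable for infinitely many $q$ as soon as $2p$ and $8$ impose compatible constraints, which they do since $\gcd(2p,8)=2$ and $\mp1\equiv 5\pmod 2$. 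Hence Dirichlet gives infinitely many primes $q$ of the required shape, each corresponding to a value of $k$, and for every such $k$ the knot $T_{2p,2kp\pm1}$ cannot bound a locally-flat M\"obius band. The main obstacle, I expect, is bookkeeping the signs: there are two families ($2kp+1$ and $2kp-1$), each with its own definiteness of $W$, its own inverse-matrix sign, and its own target residue class mod $8$; I would handle this by treating the two cases in parallel with a single $\pm$ throughout and verifying at the end that in at least one of the two families (which is all the statement requires — ``there are infinitely many positive integers $k$'') the congruence conditions are consistent and the Jacobi-symbol computation closes.
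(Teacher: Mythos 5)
Your proposal has two genuine gaps.

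First, Lemma~\ref{l:surgerypresSigmatorus} computes a filling of $\Sigma(2,2n,2n\pm1)$, i.e.\ the branched double cover of a torus knot whose two parameters differ by one. The knot $T_{2p,2kp\pm 1}$ is of this form only when $k=1$. For $k\ge 2$ the $2\times 2$ intersection form $\bigl(\begin{smallmatrix}\mp p-1 & p\\ p& \mp p-1\end{smallmatrix}\bigr)$ you write down has determinant $\pm 2p+1$, which has absolute value $2p\pm 1\ne 2kp\pm 1 = |\det T_{2p,2kp\pm1}|$, so it does not present $H_1$ of $\Sigma(2,2p,2kp\pm1)$. The paper must replace it with a $2k\times 2k$ tridiagonal linking matrix $Q$ (obtained by ``tweaking'' the lemma), and the relevant number is the $(1,1)$--entry of $-Q^{-1}$, which is $\frac{qr\mp p}{qr}$, i.e.\ the linking form represents $\mp\frac{p}{\det}$ as a square, not a value tied to $\mp p-1$.

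Second, and more seriously, the strategy of choosing $k$ so that $q\coloneqq 2kp\pm1$ is \emph{prime} and then tuning $q$ modulo $8$ cannot work for all odd primes $p$. To rule out both $\frac1q$ and $-\frac1q$ as squares of a rank--one linking form $\langle\frac{c}{q}\rangle$ you need $\jacobi{-1}{q}=1$ (so $q\equiv 1\pmod 4$) and $\jacobi{c}{q}=-1$, where here $c\equiv \mp p\pmod q$, so in fact you need $\jacobi{p}{q}=-1$. But $q=2kp\pm1\equiv\pm1\pmod p$, so quadratic reciprocity (using $q\equiv1\pmod4$) gives $\jacobi{p}{q}=\jacobi{q}{p}=\jacobi{\pm1}{p}$. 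This equals $+1$ in the $q=2kp+1$ family for every $p$, and equals $+1$ in the $q=2kp-1$ family whenever $p\equiv1\pmod4$. So for $p\equiv1\pmod 4$ the Murakami--Yasuhara obstruction from Proposition~\ref{p:MY} never fires with a prime determinant, no matter what residue of $q$ mod $8$ you aim for. The paper avoids this by deliberately \emph{not} making $2kp\pm1$ prime: it first fixes a prime $q$ with $q\equiv1\pmod4$ and $\jacobi{p}{q}=-1$ (which exists by Dirichlet, with no interference from $q\bmod p$), then uses a second Dirichlet argument to produce $k$ with $2kp\pm1=qr$ for another prime $r$. The obstruction is then read off modulo $q$ alone; the factor $r$ is only there to satisfy the constraint $2kp\pm1\equiv\pm1\pmod{p}$ and plays no role in the Jacobi symbol computation.
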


The statement means that for each choice of a sign there are infinitely many values of $k$ such that the statement is true; i.e. we do not claim that there are values of $k$ such that the statements holds with both signs. We prefer to leave the ambiguity in order to keep the notation lighter.

\begin{proof}
Let $q$ be a prime number with $q\equiv 1 \pmod 4$ and such that $p$ is not a square residue modulo $q$. We claim that we can always construct such a number.
Indeed, by quadratic reciprocity, $\jacobi{p}{q} = \jacobi{q}{p}$; so it suffices to choose $q$ such that $\jacobi{q}{p} = -1$. There are infinitely many such primes: it suffices to pick an integer $r$ such that $\jacobi{r}{p} = -1$ and look at the arithmetic progression $r+sp$, which contains infinitely many primes by Dirichlet's theorem.
Since $q\equiv 1 \pmod 4$, $\jacobi{-1}q = 1$, so that $\jacobi{-p}{q} = -1$, too.
In particular, for any positive integer $r$, neither $-p$ nor $p$ is a square modulo $qr$.

Let $h_0$ be any positive integer such that $2h_0p \pm 1 \equiv q \pmod{q^2}$. Let us look at the arithmetic progression $2h_0p \pm 1 + hpq^2$, with $h > 0$. Since $\gcd(2h_0p\pm1, pq^2) = q$, we can re-write
\[
2h_0p \pm 1 + hpq^2 = q(a + hpq),
\]
where $\gcd(a,pq) = 1$; in particular, by Dirichlet's theorem there are infinitely many primes in the arithmetic progression $a+hpq$. Choose $h$ such that $r = a+hpq$ is a prime, and let $k = h_0 + q^2h$.
In particular, we have $2kp\pm 1 = qr$.

We claim that $T_{2p,2kp\pm 1}$ does not bound a locally-flat M\"obius band, and more precisely that $T_{2p,2kp\pm 1}$ violates the Murakami--Yasuhara criterion.

Since $qr = 2kp\pm 1$, $qr = \det T_{2p,2kp\pm 1}$; since $q$ and $r$ are distinct primes, $H_1(\Sigma(2,2p,qr)) \cong \Z/qr\Z$.
By tweaking the proof of Lemma~\ref{l:surgerypresSigmatorus}, we see that $\Sigma(2,2p,qr)$ has a surgery presentation for which the linking matrix is given by the following tridiagonal $2k\times 2k$ matrix:
\[
Q = \left({
\tiny{
\begin{array}{cccccccccccccccccccc}
-2 & 1  \\
1 & -2 \\
& & \ddots  \\
& & &  -2 & 1 & 0 & 0\\
& & & 1 & \pm p - 1 & \pm p & 0\\  
& & & 0 & \pm p & \pm p - 1 & 1\\
& & & 0 & 0 & 1 &- 2\\
& & & & & & & \ddots \\
& & & & & & & & -2 & 1 \\
& & & & & & & & 1 & -2 \\
\end{array}}
}
\right).
\]
Since $H_1(\Sigma(2,2p,qr))$ is cyclic, it suffices to compute one non-trivial square in the linking form $\lambda$; by an explicit inductive computation, the first entry of the matrix $-Q^{-1}$, which is the matrix that represents the linking form, is $\frac{qr\mp p}{qr}$.

Since $\jacobi{p}{q} =\jacobi{-p}{q}= -1$ by choice, and since $\lambda$ represents $\mp\frac{p}{qr}$ as a square by the previous computation, $\lambda$ does not represent $\frac{1}{qr}$ nor $-\frac{1}{qr}$ as a square. Thus, by Proposition~\ref{p:MY}, $T_{2p,2kp\pm1}$ is not the boundary of a locally-flat M\"obius band.
\end{proof}

We want to highlight the limitations of the techniques we used above.
%
%
There are shortcomings to applying Proposition~\ref{p:MY} even when $|{\det K}| =: q$ is a prime: if $-1$ is not a square residue modulo $q$ (equivalently, if $q \equiv 3 \pmod 4$), then for algebraic reasons either $\frac1q$ or $-\frac1q$ is always represented by a square. So, for instance, we cannot directly conclude anything about the existence of a M\"obius band in $B^4$ whose boundary is $T_{6,7}$. However, we can find an alternative to Proposition~\ref{p:T45}.

\begin{prop}\label{p:T67}
Let $n \equiv 3 \pmod 4$ be a positive integer such that $2n+1$ is square-free and ${n+1}$ is not a square.
Then $T_{2n,2n+1}$ does not bound a \emph{smooth} M\"obius band in $B^4$.
In particular, $T_{14,15}$ and $T_{22,23}$ do not bound smooth M\"obius bands in $B^4$.
\end{prop}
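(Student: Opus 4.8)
The plan is to run the branched-double-cover construction of Section~\ref{s:nomob}, but now over $B^4$ rather than $\ed$, and to combine it with the smooth gauge-theoretic obstruction to definite fillings (the Ozsv\'ath--Szab\'o $d$-invariant inequality, equivalently Fr\o yshov's inequality) rather than the purely homological obstruction used for Theorem~\ref{thm:nomob} or the Murakami--Yasuhara criterion used for Proposition~\ref{p:T45}. Set $p = 2n+1$, a prime with $p \equiv 7 \pmod 8$ since $n \equiv 3 \pmod 4$. Assume for contradiction that $K = T_{2n,2n+1}$ bounds a smooth M\"obius band $M \subset B^4$. First I would form $X := \Sigma_2(B^4,M)$, the smooth double cover of $B^4$ branched along $M$; its boundary is $\Sigma_2(S^3,K) = \Sigma(2,2n,2n+1) =: Y$. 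A short computation (eigenspace decomposition of the branched cover, or Mayer--Vietoris as in Section~\ref{s:nomob}) gives $H_1(X;\Q) = 0$, and then $\chi(X) = 2\chi(B^4) - \chi(M) = 2$ forces $b_2(X) = 1$; since $Y$ is a rational homology sphere with $|H_1(Y)| = p$, the intersection form of $X$ is $\langle \pm p\rangle$.

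Next I would pin down the sign. Reading the linking form of $Y$ off the trace $W$ of Lemma~\ref{l:surgerypresSigmatorus} (in the ``$+$'' case) gives $\lambda_Y \cong \langle (n+1)/p\rangle$; since $2(n+1)\equiv 1 \pmod p$ and $2$ is a quadratic residue mod $p$ (as $p\equiv 7\pmod 8$), this is $\cong \langle 1/p\rangle$. Comparing with the linking form read off from $X$, and using that $-1$ is a non-residue mod $p$ (as $p \equiv 3 \pmod 4$), the only consistent possibility is that the intersection form of $X$ is $\langle -p\rangle$, so $X$ is negative definite. Note this merely reproduces the Murakami--Yasuhara conclusion of Proposition~\ref{p:MY}, which for $p\equiv 3\pmod 4$ is vacuous --- which is exactly why the locally-flat argument of Proposition~\ref{p:T45} fails here and a genuinely smooth input is needed.

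The heart of the argument is then the smooth obstruction to negative-definite fillings. Since $X$ is a smooth negative-definite $4$-manifold with $b_2(X)=1$, form $\langle -p\rangle$, and $\partial X = Y$, the Ozsv\'ath--Szab\'o inequality $c_1(\mathfrak t)^2 + b_2(X) \le 4\,d(Y,\mathfrak t|_Y)$, applied over all $\mathrm{spin}^c$ structures $\mathfrak t$ on $X$ (whose $c_1$ are the odd multiples $(2j+1)$ of the dual generator), gives $d(Y,\mathfrak s_j) \ge \tfrac14\big(1 - \tfrac{(2j+1)^2}{p}\big)$ for each $j$, where $\mathfrak s_j$ runs over the $p$ $\mathrm{spin}^c$ structures of $Y$; in particular $d(Y,\mathfrak s_0) \ge \tfrac{p-1}{4p}$. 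On the other hand $Y = \Sigma(2,2n,2n+1)$ is Seifert fibered and bounds the explicit negative-definite $W$ with plumbing/surgery description in Figure~\ref{f:Sigma245}(C), so all its correction terms $d(Y,\mathfrak s_j)$ are computable by the Ozsv\'ath--Szab\'o plumbing algorithm. The plan is to carry out this computation and verify that the system of inequalities above is violated precisely when $n+1$ is not a perfect square. (Equivalently, one can phrase this lattice-theoretically: two smooth negative-definite fillings of $Y$ --- the rank-one $X$ and the rank-two $W$ --- force the orthogonal complement of a square-$(-p)$ vector in the glued intersection lattice to be the non-principal binary form $[2,2,n+1]$ of discriminant $-4p$; this is obstructed by the $d$-invariant refinement of Donaldson's theorem exactly when $n+1$ is not a square, the square case $n=3$ being genuinely unobstructed, matching the paper's comment about $T_{6,7}$.)

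The main obstacle is precisely this last step: the naive closed manifold $X \cup_Y \overline W$ has $b_2^+ = 2$, so Donaldson's diagonalization theorem does not apply directly, and in fact its underlying integral intersection form (an odd indefinite rank-three form) carries no contradiction at all --- one really does need the finer data in the $d$-invariants of $\Sigma(2,2n,2n+1)$ and careful bookkeeping of which $\mathrm{spin}^c$ structures the classes of $X$ restrict to in order to extract the condition on $n+1$. Two secondary technical points: one should verify $H_1(X;\Q)=0$ (hence $b_2(X)=1$) without assuming the complement of $M$ has cyclic fundamental group --- this follows from the vanishing of the twisted homology of the M\"obius band $M\simeq S^1$ with coefficients in its orientation local system --- and one should spell out the orientation conventions so that the signs of the intersection form of $X$ and of the linking form of $Y$ are computed compatibly (the conclusion $\langle -p\rangle$ is robust, but it deserves an explicit check).
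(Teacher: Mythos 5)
Your overall strategy matches the paper's: form the smooth branched double cover $Z := \Sigma_2(B^4,M)$ with $\partial Z = Y = \Sigma(2,2n,2n+1)$, argue $b_2(Z) = 1$, use the linking form of $Y$ (read off from the rank-two negative-definite filling $W$ of Lemma~\ref{l:surgerypresSigmatorus} together with $p\equiv 7\pmod 8$ and Lemma~\ref{l:presentingassquares}) to pin down the intersection form of $Z$ as $\langle -p\rangle$, and then play the Ozsv\'ath--Szab\'o/Fr{\o}yshov-type obstruction for negative-definite fillings against the $d$-invariants of $Y$ computed via the plumbing algorithm from $W$. That much is correct and in the spirit of the actual proof.

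The gap is exactly the step you flag as ``the main obstacle'' and then leave as ``the plan is to carry out this computation.'' Crucially, the inequality $c_1(\fs)^2+b_2(Z)\le 4d(Y,\fs|_Y)$ by itself cannot produce a contradiction here: the plumbing formula (reduced to the rank-two lattice of $W$, as in the paper) gives $d(Y,\ft)=\tfrac12-(\eta_1+\eta_2)^2-\tfrac{\eta_1^2+\eta_2^2}{p}$, and there is always a spin$^c$ structure with $\eta_1=\eta_2=0$ and $d=\tfrac12>\tfrac{p-1}{4p}$. So without identifying \emph{which} $\ft\in\mathrm{Spin}^c(Y)$ is the restriction of the odd primitive class on $Z$, the inequality applied to ``all $\fs_j$'' is not violated. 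The paper sidesteps the spin$^c$ bookkeeping entirely by invoking, in addition to the inequality, the mod-$2$ congruence that is also part of \cite[Theorem~9.6]{OSz-absolutely}: $\tfrac{c_1(\fs)^2+1}{4}\equiv d(Y,\ft)\pmod 2$. Combined with the positivity forcing $\eta_1+\eta_2=0$, this yields the arithmetic constraint $8x^2\equiv 1\pmod p$, i.e.\ $x^2\equiv \tfrac{n+1}{4}\pmod p$; since $\tfrac{n+1}{4}$ (equivalently $n+1$) is assumed not to be a perfect square, any integer solution has $x^2>\tfrac{n+1}{4}$, which together with the inequality $x^2\le\tfrac{n+1}{4}$ gives the contradiction, and also explains why $n=3$ (i.e.\ $T_{6,7}$) escapes. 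As written, your proposal does not invoke the congruence and does not supply an alternative way to match the plumbing labeling of $\mathrm{Spin}^c(Y)$ with the $Z$-side labeling, so the argument is incomplete at precisely the step where the hypothesis on $n+1$ enters.

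Two minor remarks: the lattice-theoretic aside about the closed-up manifold is a heuristic at best (you correctly note $b_2^+=2$ for $Z\cup_Y\overline W$, so neither Donaldson nor a naive unimodular-form argument applies), and it would need to be replaced by exactly the $d$-invariant refinement above; and while your twisted-coefficient justification of $b_1(Z;\Q)=0$ is sound, it is worth noting this plays the role of Lemma~\ref{l:HSigmaM} but the Euler-characteristic bookkeeping changes because $\chi(B^4)=1\neq 0=\chi(\ed)$.
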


Note that there are infinitely many integers satisfying the three conditions in the statement: for instance, $n+1$ is never a square if $n \equiv 7 \pmod {16}$, and by Dirichlet's theorem on primes in arithmetic progressions there are infinitely many primes (and in particular square-free integers) congruent to $15$ modulo $32$.

\begin{proof}
We know from Lemma~\ref{l:surgerypresSigmatorus} that $Y \coloneqq \Sigma(2,2n,2n+1) = \Sigma(T_{2n,2n+1})$ bounds a spin negative definite $4$--manifold $W$ with second Betti number $2$.
By successively blowing up $W$, we obtain the canonical negative plumbing $P$ whose boundary is $Y$, according to Neumann~\cite{Neumann}; call $\Gamma$ the associated weighted graph (which is a three-legged star-shaped graph, in this case). Ozsv\'ath and Szab\'o computed correction terms of $Y$ starting from the plumbing graph $\Gamma$~\cite{OSz-plumbing} as follows.

Let $L$ be the intersection lattice of $P$, and fix a coset in ${\rm Char}(L)/2L$ (recall that the set ${\rm Char}(L)$ of characteristic covectors of $L$ is a $2L$--torsor). Since cosets in ${\rm Char}(L)/2L$ are in bijection with spin$^c$ structures on $Y$, we will denote the coset by the spin$^c$ structure $\ft$ it corresponds to. Then:
\begin{equation}\label{e:OSz-plumbing}
d(Y,\ft) = d(L,\ft) \coloneqq \max_{\xi \in \ft} \frac{\xi^2 + \rank L}4.
\end{equation}

We make an easy observation here: if $L = \langle -1 \rangle \oplus L'$, then the inclusion $L'\to L$ induces a bijection $j: {\rm Char}(L)/2L \to {\rm Char}(L')/2L'$, and it is easy to verify that
$d(L,\ft) = d(L',j(\ft))$.
This implies that, since $P$ is a blow-up of $W$ and the intersection from changes by taking the direct sum with $\langle -1 \rangle$ under blow-ups, we can do the maximisation on the intersection lattice of $W$ (which has rank $2$), rather than on the intersection lattice of $P$.

To unravel~\eqref{e:OSz-plumbing}, let $Q$ be the $2\times 2$ matrix $\big({\tiny\begin{array}{cc}- n - 1& n\\n& -n - 1\end{array}}\big)$. Since the entries on the diagonal are both even, characteristic covectors are of the form $2Q^{-1}\eta^T$, where $\eta = (\eta_1,\eta_2)$ is an integer vector. The maximisation then reads:
\begin{equation}\label{e:explicitd}
d(Y,\ft) = \max_{\xi \in \ft} \frac{\xi^2 + \rank L}4 = \max_{2Q^{-1}\eta^T \in \ft} {\eta}\,Q^{-1}\eta^T + \frac12 = \frac12 - (\eta_1 + \eta_2)^2 - \frac{\eta_1^2 + \eta_2^2}{2n+1}.
\end{equation}
Note that the quantity on the right-hand side is \emph{negative} as soon as $\eta_1+\eta_2 \neq 0$.

Suppose that $T_{2n,2n+1}$ bounds a (locally-flat) M\"obius band $M$ in $B^4$. Then $Y$ bounds a $4$--manifold $Z$ with $b_2(Z) = 1$, namely  the double cover of $B^4$ branched over $M$. If $M$ is smoothly embedded, then $Z$ is a \emph{smooth} $4$--manifold.

Since $2n+1$ is a square-free, the intersection form of $Z$ is $\langle\pm(2n+1)\rangle$.
Suppose that $Z$ were positive definite. Then we could glue $Z$ and $-W$ along their boundary to get a closed, positive definite $4$--manifold $X$ with $b_2(X) = 3$. Its intersection form, since it is unimodular and negative definite, has to be diagonal. But this implies that the self-intersection of the generator of $H_2(Z)$, which is $2n+1$, is a sum of three squares. But this contradicts the fact that $2n+1 \equiv 7 \pmod 8$.

So $Z$ is negative definite, with intersection form $\langle -2n-1\rangle$.
However, Ozsv\'ath and Szab\'o~\cite[Theorem~9.6]{OSz-absolutely} proved, under these assumptions, for each spin$^c$ structure $\fs$ on $Z$ which restricts to $\ft$ on $Y$, that
\begin{equation}\label{e:OSz-inequality}
\frac{c_1(\fs)^2 + 1}4 \le d(Y,\ft),
\end{equation}
and that the two sides are congruent modulo $2$.

Let us focus on the case where $\langle c_1(\fs), H_2(Z) \rangle = \Z$ (this is possible since the intersection form of $Z$ is odd, and therefore there is such a spin$^c$ structure; in fact there are exactly two, which are conjugate). Then $c_1(\fs)^2 = -\frac{1}{2n+1}$, and therefore
$\frac{c_1(\fs)^2 + 1}{4}  = \frac{n}{4n+2} > 0$.

It follows from~\eqref{e:OSz-inequality} that $d(Y,\ft) > 0$, and therefore, from~\eqref{e:explicitd}, that $\ft$ corresponds to $Q^{-1}\eta^T$ with $\eta_1 + \eta_2 = 0$. In particular,
$d(Y, \ft) = \frac12 - \frac{2x^2}{2n+1}$
for the integer $x=\eta_1$.

We now use the congruence condition in~\eqref{e:OSz-inequality}. We have (the reduction modulo $\frac12$ of) the congruence condition (modulo $2$) telling us that
$-1 \equiv -8x^2 \pmod{2n+1}$.
However, if $\frac{n+1}4$ is not a square, then the smallest positive integer solution $x$ of this congruence has $x > \sqrt{\frac{n+1}4}$, and therefore
\[
d(Y,\ft) = \frac12 - \frac{2x^2}{2n+1} < \frac12 - \frac{n+1}{4n+2} = \frac{n}{4n+2} = \frac{c_1(\fs)^2 + 1}{4},
\]
which contradicts~\eqref{e:OSz-inequality}.
\end{proof}

Recall that $2$--bridge links are links $L \subset \R^3 \subset S^3$ such that the restriction of the $z$--function is Morse with two minima and two maxima. The double cover of $S^3$ branched over a two $2$--bridge link is a lens space, and two $2$--bridge links are isotopic if and only if their branched covers are homeomorphic. We refer to $K_{p/q}$ as the unique $2$--bridge link whose double cover is the lens space $L(p,q)$; note that $K_{p/q}$ is a knot if and only if $p$ is odd, and that $p = |{\det K_{p/q}}|$.

\begin{prop}\label{p:2bridge}
Let $p$ a positive integer.
If $p\equiv 5\pmod 8$, the $2$--bridge knot $K_{p/(p-2)}$ does not bound a \emph{locally-flat} M\"obius band in $B^4$. If $p\equiv 7\pmod 8$ and $p>7$, the $2$--bridge knot $K_{p/(p-2)}$ does not bound a \emph{smooth} M\"obius band in $B^4$.
\end{prop}

\begin{proof}
Suppose that $K_{p/(p-2)}$ bounds a locally-flat M\"obius band $M$ in $B^4$; then the double cover of $B^4$ branched over $M$ is a $4$--manifold $Z$ with $b_2(Z) = 1$ whose boundary is a lens space, namely $\Sigma(K_{p/(p-2)}) = L(p,p-2)$.
The intersection form on $Z$ gives a presentation of the restriction of the linking form on $L(p,p-2)$ to a subgroup $G$, where $G$ is as in Proposition~\ref{p:MY}; see e.g.~\cite[Lemma~E.1]{GilmerLivingston_11}. More precisely, if the intersection form of $Z$ is isomorphic to $\langle \pm r \rangle$ for some positive integer $r$, then the linking form of $L(p,p-2)$ restricted to
$G$ is isomorphic to $\langle \mp \frac{1}{r} \rangle$.
Recall that the linking form $\lambda$ of $L(p,p-2)$ is isomorphic to $\langle \frac{p-2}p \rangle = \langle \frac{-2}p \rangle$. Thus, $\lambda|_G$ presents $\frac{-2}{r}$ as a square, in addition to presenting $\frac{1}{r}$ or $\frac{-1}{r}$ as a square.

Note that, since all odd squares are congruent to 1 modulo 8, $r \equiv p \pmod 8$.
If $p \equiv 7 \pmod 8$,
$r$ is not a square and 
it is either divisible by a prime congruent to $7$ modulo $8$, or by a prime congruent $5$ modulo $8$. If $p \equiv 5 \pmod 8$, again $r$ is not a square and 
it is either divisible by a prime congruent to $5$ modulo $8$ or by both a prime congruent $3$ modulo $8$ and $7$ modulo $8$.

We can exclude the case where $r=|G|$ is divisible by a prime congruent to $5$ modulo $8$ or by both a prime congruent $3$ modulo $8$ and one congruent $7$ modulo $8$ since otherwise, arguing as in the proof of Proposition~\ref{p:T45}, we find that $\lambda|_G$ does not present $\frac{-2}{r}$ as square, which yields a contradiction. In particular, this concludes the proof if $p \equiv 5 \pmod 8$.

It remains to treat the case where $p \equiv 7 \pmod 8$ with $p>7$ and $r=|G|$ is divisible by a prime $s \equiv 7 \pmod 8$ but not by a prime congruent to $3$ modulo $8$.
Since $s\equiv 7 \pmod 8$, 
Lemma~\ref{l:presentingassquares}\eqref{linking3mod4} implies that the reduction of $\lambda|_G$ modulo $s$ cannot present $\frac1s$ as a square, as this would contradict the reduction of $\lambda|_G$ modulo $s$ representing $-\frac{2}{s}$ as a square. Hence $\lambda|_G$ cannot present $\frac1r$ as a square. Therefore, the linking form restricted to $G$ is isomorphic to
$\langle - \frac{1}{r} \rangle$ (and not $\langle \frac{1}{r} \rangle$).
It follows that $Z$ is positive definite, i.e.~that its intersection form is isomorphic to $\langle r \rangle$.

At this point, we further assume that $M$ is smooth, hence $Z$ is smooth.
Gluing $-Z$ and the negative definite plumbing $P$ whose boundary is $L(p,p-2)$, we obtain a smooth, negative definite $4$--manifold $X$. In particular, the intersection form of $P$ embeds with co-rank $1$ in a diagonal lattice, by Donaldson's diagonalisation theorem~\cite{Donaldson}.

Recall that the negative definite plumbing of $L(p,p-2)$ is determined by the negative continued fraction expansion of $p/(p-2) = [2,\dots,2,3]$, where the string of $2$s has length $N-2 = (p-3)/2$. However, if $p > 7$ the string of $2$s has a unique embedding, namely (in some basis $e_1,\dots,e_N$ of $\Z^N$)
$e_1-e_2,\dots,e_{N-2} - e_{N-1}$.
Since $N \ge 2$ by assumption, there is no vector in $\Z^N$ of self-intersection $-3$ that intersects the chain (algebraically) once in the last vector, which contradicts our assumption.
\end{proof}

\bibliography{squarepeg}
\bibliographystyle{alpha}

\vskip 0,3cm

\end{document}